\numberwithin{equation}{section}
\newcommand{\cc}{\mathbb{C}}
\newcommand{\N}{\mathbb{N}}
\newcommand{\R}{\mathbb{R}}
\renewcommand{\L}{\mathcal{L}}
\newcommand{\Bn}{\mathbb B_n}
\newcommand{\Sn}{\mathbb S_n}
\newcommand\set[1]{\left\{#1\right\}}
\providecommand{\abs}[1]{\lvert#1\rvert}
\newcommand{\lambdaw}{\lambda_\rho}
\newcommand{\vhw}[1]{\langle #1\rangle_\rho}
\def\A2w{A^2_\rho}
\providecommand{\normw}[1]{\lVert#1\rVert_\rho}
\def\id{\operatorname{Id}}
\newcommand{\less}{\lesssim}
\newcommand{\gess}{\gtrsim}
\newcommand{\asym}{\asymp}
\newcommand{\vh}[1]{\langle #1\rangle}
\providecommand{\norm}[1]{\lVert#1\rVert}
\newtheorem{Thm}{Theorem}[section]
\newtheorem{theorem}[Thm]{Theorem}
\newtheorem{lemma}[Thm]{Lemma}
\newtheorem{proposition}[Thm]{Proposition}
\newtheorem{corollary}[Thm]{Corollary}
\theoremstyle{definition}
\newtheorem{remark}[Thm]{Remark}
\begin{document}
\sloppy
\title[Carleson measures and Toeplitz operators]
{Carleson measures and Toeplitz operators on small Bergman spaces on the ball}

\author{Van An Le}
\address{Aix--Marseille University, CNRS, Centrale Marseille, I2M, Marseille, France}
\address{University of Quynhon, Department of Mathematics, 170 An Duong Vuong, Quy Nhon, Vietnam}
\email{vanandkkh@gmail.com}

\keywords{Bergman spaces, Carleson measures, Toeplitz operators, Schatten classes.} 
\subjclass[2010]{30H20; 47B35.}

\begin{abstract}
We study the Carleson measures and the Toeplitz operators on the class of so-called small weighted Bergman spaces, introduced recently by Seip. A characterization of Carleson measures is obtained which extends Seip's results from the unit disc of $\cc$ to the unit ball of $\cc^n$. We use this characterization to give necessary and sufficient conditions for the boundedness and compactness of Toeplitz operators. Finally, we study the Schatten $p$ classes membership of Toeplitz operators for $1<p<\infty$.
\end{abstract}

\maketitle

\section{Introduction}

Let $\cc^n$ denote the $n-$dimensional complex Euclidean space, $\Bn=\{z\in \cc^n:|z|<1\}$ be the unit ball and $\Sn=\set{z\in \cc^n: \abs z=1}$ be the unit sphere in $\cc^n$.
Denote by $H(\Bn)$ the space of all holomorphic functions on the
unit ball $\Bn$.
Let $dv$ be the normalized volume measure on $\Bn$.
The normalized surface measure on $\Sn$ will be denoted by $d\sigma$.

Let $\rho$ be a positive continuous and integrable function on $[0,1)$. We  extend it to $\Bn$ by $\rho(z)=\rho(\abs z)$, and call such $\rho$ a weight function. The weighted Bergman space $\A2w$ is the space of functions $f \text{ in } H(\Bn)$ such that 
 $$ \normw{f}^2 =\int_{\Bn}\abs{f(z)}^2\rho( z)dv(z)<\infty. $$
Note that $\A2w$ is a closed subspace of $L^2(\Bn, \rho dv)$ and hence is a Hilbert space  endowed with the inner product 
 $$\vhw{f,g}=\int_{\Bn}f(z)\overline{g(z)}\rho(z)dv(z),\qquad f,g \in \A2w.$$
 When $\rho(r)=(1-r^2)^\alpha, \alpha >-1$, we obtain the standard Bergman spaces $A^{2}_\alpha$.

We impose a normalization condition on $\rho$:
$$ \int_0^1 x^{2n-1} \rho(x)dx=1. $$

Consider the points $r_k\in [0,1)$ determined by the relation 
$$\int_{r_k}^1 \rho(x)dx=2^{-k}.$$

Denote by $S$ the class of weights $\rho$ such that
\begin{equation} \label{1}
\inf_{k} \dfrac{1-r_k}{1-r_{k+1}}>1.
\end{equation}

Since the function $r\mapsto \int_{\Sn}\abs{f(r\xi)}^2d\sigma(\xi)$ is non-decreasing, we also have the equivalent norm
\begin{equation}\label{*}
\normw{f}^2\asymp\sum_{k=1}^\infty 2^{-k}\int_{\Sn}\abs{f(r_k\xi)}^2d\sigma(\xi), \qquad f\in \A2w.
\end{equation}

The class $S$ was introduced by  Kristian Seip in  \cite{Seip2013}.
It is easy to see that the functions
$$\rho(x)=(1-x)^{-\beta},\qquad 0<\beta<1, $$
and
$$\rho(x)=(1-x)^{-1}\left (\log \dfrac{1}{1-x}\right )^{-\alpha}, \qquad 1 <\alpha <\infty,$$
belong to $S$.

In this paper we prove a characterization of Carleson measure for weighted Bergman spaces $\A2w$, with $\rho\in S$. This result is then used to study spectral properties of Toeplitz operators on these spaces.

Let $\mu$ be a finite positive Borel measure on $\Bn$. We say that $\mu$ is a Carleson measure for a Hilbert space $X$ of analytic functions in $\Bn$ if there exists a positive constant $C$ such that 
$$ \int_{\Bn}\abs{f(z)}^2d\mu(z)\le C\norm{f}_X^2, \qquad f\in X. $$
It is clear that  $\mu$ is a Carleson measure for $\A2w$ if and only if $\A2w\subset L^{2}(\Bn, d\mu)$ and the identity operator $\id: \A2w\to L^{2}(\Bn, d\mu)$ is bounded. The Carleson constant  of $\mu$, denoted by $\mathcal{C}_\mu(\A2w)$, is the norm of this identity operator $\id$.
Suppose that $\mu$ is a Carleson measure for $\A2w$.  We say that $\mu$ is a vanishing Carleson measure for $\A2w$ if the above identity operator $\id$ is compact. That is,
$$ \lim_{k\to\infty}\int_{\Bn}\abs{f_k(z)}^2d\mu(z)=0$$ 
whenever $\set{f_k}$ is a bounded sequence in $\A2w$ which converges to $0$ uniformly on compact subsets of $\Bn$.

The concept of a Carleson measure was first introduced by L. Carleson \cite{Carleson1, Carleson2} in order to study interpolating sequences and the corona problem on the algebra $H^\infty$ of all bounded analytic functions on the unit disk. It quickly became a powerful tool for the study of function spaces and operators acting on them. The Carleson measures on Bergman spaces were studied by Hastings \cite{Hastings}, and later on by Luecking \cite{Luecking1}, and many others. Recently, Pau and Zhao \cite{pau2015} gave a characterization for Carleson measures and vanishing Carleson measures on the unit ball by using the products of functions in weighted Bergman spaces. In  \cite{pelaez}, Pel\'aez and R\"atty\"a gave a description of Carleson measures for $\A2w$ on unit disk when $\rho$ is such that $\dfrac{1}{(1-r)\rho(r)}\int_r^{1}\rho(t)dt$ is either equivalent to $1$ or tends to $\infty$, and in \cite{pelaez2015} they then got a criterion for $\A2w$ on unit disk when $\rho \in \widehat{\mathcal{D}}$, which means $\int_{r}^1 \rho(s)ds \lesssim \int_{\frac{r+1}{2}}^1\rho(s)ds$. 

In \cite{Seip2013}, Seip gave a characterization of Carleson measures for $\A2w$ with $\rho \in S$ in the case $n=1$. One of our main results, Theorem \ref{Carleson}, extends this result to the case $n>1$.

Given a function $\varphi\in L^\infty(\Bn)$, the Toeplitz operator $T_\varphi$ on $\A2w$ with symbol $\varphi$ is defined by
$$
T_\varphi f = P(\varphi f), \qquad f\in \A2w,			$$
where $P: L^2(\Bn,\rho dv) \to \A2w$ is the orthogonal projection onto $\A2w$. Using the integral representation of $P$, we can write $T_\varphi$ as 
$$T_\varphi f(z)=\int_{\Bn}K_\rho(z,w)f(w)\varphi(w)\rho(w)dv(w),\qquad z\in \Bn,$$
where $K_\rho(z,w)$ is the reproducing kernel for $\A2w$. The Toeplitz operators can also be defined for unbounded symbols or for finite measures on $\Bn$. In fact, given a finite positive Borel measure $\mu$ on $\Bn$, the Toeplitz operator $T_\mu: \A2w \to \A2w$ is defined as follows:
$$ T_\mu f(z)=\int_{\Bn} K_\rho(z,w)f(w)d\mu(w), \qquad z\in \Bn. $$
Note that 
$$\vhw{T_\mu f,g}=\int_{\Bn} f(z)\overline{g(z)}d\mu(z), \qquad f,g\in \A2w.$$
The Toeplitz operators acting on various spaces of holomorphic functions have been extensively studied by many authors, and the  theory is especially well understood in the case of Hardy spaces or standard Bergman spaces (see \cite{zhuball}, \cite{zhudisk} and the references therein). Luecking \cite{Luecking2} was the first to study Toeplitz operators on Bergman spaces with measures as symbols, and some interesting results about Toeplitz acting on large Bergman spaces were obtained by Lin and Rochberg \cite{LinRoc96}. In this paper, we will study the boundedness and compactness of $T_\mu$ on $\A2w$, with $\rho \in S$.

Next we study when our Toeplitz operators belong to the Schatten class. We refer to \cite[Chapter 1]{zhudisk} for a brief account on the Schatten classes. A description for the standard Bergman spaces on the unit disk was given (see \cite[Chapter 7]{zhudisk}), and a  description for the case of large Bergman spaces on the disk was obtained in 2015 by H. Arroussi, I. Park, and J. Pau \cite{Arroussi2015}.  In 2016, Pel\'aez and R\"atty\"a \cite{pelaez2016} gave an interesting characterization for the case of small Bergman spaces on unit disk, where the weight $\rho \in \widehat{\mathcal{D}}$. Note that $S \subsetneqq \widehat{\mathcal{D}}$, but $\set{\A2w:\rho\in S}=\set{\A2w:\rho\in \widehat{\mathcal{D}}}$.

We introduce a subclass $S^*$ of weights in $S$ determined by the condition that $\rho^*(r)\lesssim \rho(r)$ for $r\in (0,1)$, where
$$\rho^*(r)=\dfrac{1}{1-r}\int_r^1 \rho(t)dt.$$

For example, the weights 
$$\rho(x)= (1-x)^{-\beta}\left (\log \frac{1}{1-x}\right )^{\alpha}, \qquad 0<\beta<1, \alpha\in \R$$
belong to $S^*$, but the weights
$$\rho(x)=(1-x)^{-1}\left(\log \frac{1}{1-x} \right)^{\alpha}, \qquad \alpha<-1,$$
$$\rho(x)=(1-x)^{-1}\left(\log \frac{1}{1-x} \right)^{-1}\left(\log\log \frac{1}{1-x} \right)^{\alpha}, \qquad \alpha<-1,$$
do not belong to $S^{*}$.

For weights $\rho$ in $S^*$, we obtain a characterization of the symbols of the Toeplitz operators in the Schatten classes $\mathcal S_p$. In \cite{pelaez2018}, Pel\'aez, R\"atty\"a and Sierra gave a characterization for the case of dimension $n=1$ when the weight is regular, that is $\rho^*(r)\asymp \rho(r)$. As an easy observation, our result is equivalent to their result when $n=1$. We point out that our approach is completely different from that of \cite{pelaez2018}, which does not seem to work in higher dimensions.
 On the other hand, for regular  weights $\rho$ in $S\setminus S^*$, this characterization fails. A counterexample was given in \cite{pelaez2018}.

The paper is organized as follows: The main results are stated in Section \ref{sec2} and their proofs are given in Section \ref{sec3}--\ref{sec5}. 

\section{Main results}\label{sec2}

Throughout this text, we use the following notation. For every nonnegative integer $k$, set
$$ \Omega_k=\set{z\in \Bn: r_k\le \abs z<r_{k+1}}, $$
and let $\mu_k$ be the measure defined by $\mu_k=\chi_{\Omega_k}\mu$ whenever a nonnegative Borel measure $\mu$ on $\Bn$ is given. The notation $U(z)\lesssim V(z) $ (or equivalently $V(z)\gess U(z)$) means that there is a positive constant $C$ such that $U(z)\le CV(z)$ holds for all $z$ in the set in question, which may be a space of functions or a set of numbers. If both $U(z)\less V(z)$ and $V(z)\less U(z)$, then we write $U(z)\asym V(z)$.

Our results are following:
\begin{theorem}\label{Carleson}
Let $\rho\in S$, and let $\mu$ be a finite positive Borel measure on $\Bn$. Then
\begin{itemize}
\item[(i)] $\mu$ is a Carleson measure for $\A2w$ if and only if each $\mu_k$ is a Carleson measure for the Hardy space $H^2$ with Carleson constant $\mathcal C_{\mu_k}(H^2)\lesssim 2^{-k}, k \ge 0$.
\item[(ii)] $\mu$ is a vanishing Carleson measure for $\A2w$ if and only if 
\begin{equation*}
\lim_{k\to \infty} 2^k\mathcal C_{\mu_k}(H^2)=0.
\end{equation*}
\end{itemize}
\end{theorem}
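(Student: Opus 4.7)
The proof rests on the norm equivalence \eqref{*}, which expresses $\normw{f}^2$ as a dyadic weighted sum of Hardy-type sphere integrals at radii $r_k$; this is the bridge between the $\A2w$-Carleson condition on $\mu$ and a family of $H^2$-Carleson conditions on the annular pieces $\mu_k$.

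\textbf{(i), sufficiency.} Given $\mathcal C_{\mu_k}(H^2)\less 2^{-k}$ and $f\in\A2w$, decompose
\begin{equation*}
\int_{\Bn}|f|^2\,d\mu=\sum_{k\ge 0}\int_{\Bn}|f|^2\,d\mu_k.
\end{equation*}
Since $\mu_k$ is supported in $\overline{B(0,r_{k+1})}$, the dilate $z\mapsto f(r_{k+1}z)$ belongs to $H^\infty(\Bn)\subset H^2(\Bn)$ with Hardy norm squared equal to $\int_{\Sn}|f(r_{k+1}\xi)|^2\,d\sigma$. A change of variable transports $\mu_k$ to a measure supported in a thin boundary shell whose $H^2$-Carleson constant is comparable (via the non-isotropic tent characterization) to $\mathcal C_{\mu_k}(H^2)$, yielding
\begin{equation*}
\int|f|^2\,d\mu_k\less 2^{-k}\int_{\Sn}|f(r_{k+1}\xi)|^2\,d\sigma(\xi).
\end{equation*}
Summing over $k$ and applying \eqref{*} (after reindexing $k+1\mapsto k$) gives $\int|f|^2\,d\mu\less\normw{f}^2$.

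\textbf{(i), necessity.} Assume $\mu$ is Carleson for $\A2w$ and fix $k$, $g\in H^2$; we want $\int|g|^2\,d\mu_k\less 2^{-k}\|g\|_{H^2}^2$. The idea is to build an auxiliary $F\in\A2w$ with $|F|\gess|g|$ on $\Omega_k$ and $\normw{F}^2\less 2^{-k}\|g\|_{H^2}^2$, whence
\begin{equation*}
\int|g|^2\,d\mu_k\less\int_{\Omega_k}|F|^2\,d\mu\le\int|F|^2\,d\mu\less\normw{F}^2\less 2^{-k}\|g\|_{H^2}^2.
\end{equation*}
Take $F=\psi_k\cdot g$ where $\psi_k$ is a holomorphic factor satisfying $|\psi_k|\less 1$ globally, $|\psi_k|\asym 1$ on $\Omega_k$, and $|\psi_k(z)|\less(|z|/r_k)^{m_k}$ for $|z|<r_k$, with $m_k$ chosen so that $(r_j/r_k)^{2m_k}\less 2^{2(j-k)}$ for $j<k$. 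The norm equivalence \eqref{*} and the elementary bound $\int_{\Sn}|g(r_j\xi)|^2\,d\sigma\le\|g\|_{H^2}^2$ then give
\begin{equation*}
\normw{F}^2\less\|g\|_{H^2}^2\Bigl(\sum_{j<k}2^{-j}(r_j/r_k)^{2m_k}+\sum_{j\ge k}2^{-j}\Bigr)\less 2^{-k}\|g\|_{H^2}^2.
\end{equation*}
The main obstacle is constructing $\psi_k$: no single monomial like $z_1^{m_k}$ is uniformly $\gess 1$ on $\Omega_k$, so $\psi_k$ must be assembled from a patchwork of localized factors $\langle z,\zeta_\alpha\rangle^{m_k}$ indexed by a non-isotropic net $\{\zeta_\alpha\}\subset\Sn$ at scale $\sqrt{1-r_k}$; ensuring both the uniform lower bound on $\Omega_k$ and the sharp decay toward the origin, with constants independent of $k$, is the technical heart of the proof.

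\textbf{(ii).} The vanishing statement follows from (i) by a routine head-tail argument: for a bounded sequence $\{f_j\}\subset\A2w$ converging to $0$ locally uniformly, split $\int|f_j|^2\,d\mu=\sum_{k\le K}+\sum_{k>K}$. Running the sufficiency argument on the tail bounds it by $\sup_{k>K}2^k\mathcal C_{\mu_k}(H^2)\cdot\normw{f_j}^2$, which tends to $0$ uniformly in $j$ as $K\to\infty$ by hypothesis, while the head vanishes as $j\to\infty$ by uniform convergence of $f_j$ to $0$ on the compact set $\overline{B(0,r_{K+1})}$. The converse direction, extracting the vanishing condition on $\mathcal C_{\mu_k}(H^2)$, is obtained by applying the test-function construction from (i) necessity to a sequence $g_k\in H^2$ nearly saturating $\mathcal C_{\mu_k}(H^2)$; the corresponding $F_k\in\A2w$ are bounded in $\A2w$-norm and tend to $0$ on compacta, so the Carleson-vanishing hypothesis forces $\int|F_k|^2\,d\mu\to 0$, which translates into $2^k\mathcal C_{\mu_k}(H^2)\to 0$.
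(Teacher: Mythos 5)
Your sufficiency argument in (i) and the forward half of (ii) coincide in substance with the paper's: restrict the $H^2$-Carleson property of $\mu_k$ to a slightly smaller sphere and sum against the dyadic norm \eqref{*}. One caveat: you dilate by $r_{k+1}$, but $\Omega_k$ reaches up to $\abs z=r_{k+1}$, so $1-\abs z/r_{k+1}$ is not comparable to $1-\abs z$ on $\Omega_k$ and the transported measure's Carleson constant is not obviously comparable to $\mathcal C_{\mu_k}(H^2)$; dilating by $r_{k+2}$ and invoking \eqref{1} repairs this, and is what the paper does.

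The necessity direction, however, contains a genuine gap. You propose to verify the $H^2$-Carleson inequality for $\mu_k$ directly, for every $g\in H^2$, by multiplying $g$ by a holomorphic cut-off $\psi_k$ with $\abs{\psi_k}\asymp 1$ on all of $\Omega_k$, $\abs{\psi_k}\lesssim 1$ on $\Bn$, and prescribed decay toward the origin --- and you yourself flag the construction of $\psi_k$ as ``the technical heart.'' It is not carried out, and for $n\ge 2$ it is genuinely problematic: no single monomial or homogeneous polynomial is bounded below on a whole sphere, and a patchwork of factors $\langle z,\zeta_\alpha\rangle^{m_k}$ summed over a net has no evident lower bound on $\Omega_k$ because of cancellation; one would need Ryll--Wojtaszczyk-type constructions, and even then one typically obtains a finite family with $\sum_i\abs{\psi_k^{(i)}}^2\asymp 1$ rather than a single multiplier. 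All of this machinery is avoidable. The standard route --- and the paper's --- is to invoke the geometric characterization $\mathcal C_{\nu}(H^2)\asymp\sup_a \nu(Q_a)(1-\abs a)^{-n}$ (together with a localization step, Lemma \ref{lem*1}, which lets one test only on $a\in\Omega_k$), so that necessity reduces to testing $\mu$ against the explicit functions $f_a(z)=(1-\langle a,z\rangle)^{-\gamma}$: for $a\in\Omega_k$ one has $\normw{f_a}^2\asymp 2^{-k}(1-\abs a)^{n-2\gamma}$ by \eqref{*} and \eqref{1}, while $\abs{f_a}\gtrsim(1-\abs a)^{-\gamma}$ on $Q_a$, which immediately yields $\mu_k(Q_a)\lesssim 2^{-k}(1-\abs a)^n$. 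The same test functions, normalized, tend to $0$ locally uniformly and give the converse half of (ii); your version of that step again rests on the unconstructed $F_k$ and so inherits the gap.
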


Theorem \ref{Carleson} (i) for the case $n=1$ was obtained by Seip in \cite{Seip2013}.

\begin{theorem}\label{Toeplitz} Let $\rho\in S$, and let $\mu$ be a finite positive Borel measure on $\Bn$. Then
\begin{itemize}
\item[(i)] The Toeplitz operator $T_\mu $ is bounded on $\A2w$ if and only if $\mu$ is a Carleson measure for $\A2w$.
\item[(ii)] The Toeplitz operator $T_\mu$ is compact on $\A2w$ if and only if $\mu$ is a vanishing Carleson measure for $\A2w$.
\end{itemize}
\end{theorem}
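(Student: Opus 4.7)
The plan is to reduce both parts of the theorem to the basic duality identity
\[
\vhw{T_\mu f, g} = \int_{\Bn} f(z)\overline{g(z)}\,d\mu(z), \qquad f,g\in \A2w,
\]
already recorded in the introduction. Combined with Cauchy--Schwarz and a standard fact about weak convergence in reproducing-kernel Hilbert spaces, this identity yields both halves of the theorem directly, without any geometric decomposition of $\mu$ or any appeal to Theorem \ref{Carleson}.

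For part (i), putting $g=f$ in the duality identity gives $\int_{\Bn}|f|^2\,d\mu = \vhw{T_\mu f, f} \le \|T_\mu\|\,\normw{f}^2$, so boundedness of $T_\mu$ forces $\mu$ to be Carleson with constant at most $\|T_\mu\|$. Conversely, if $\mu$ is Carleson with constant $C$, applying Cauchy--Schwarz to the $\mu$-integral and then the Carleson inequality to each of $|f|^2$ and $|g|^2$ yields $|\vhw{T_\mu f, g}| \le C\,\normw{f}\normw{g}$, and taking the supremum over $\normw{g}\le 1$ delivers boundedness of $T_\mu$ with norm comparable to $C$.

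For part (ii), the first step is the observation that a norm-bounded sequence $\{f_k\}\subset \A2w$ tends weakly to zero if and only if $f_k\to 0$ uniformly on compact subsets of $\Bn$; this is a standard reproducing-kernel argument using $\vhw{f_k, K_\rho(\cdot, w)} = f_k(w)$ together with the density of the linear span of the kernels. Granted this, if $T_\mu$ is compact and $\{f_k\}$ is bounded with $f_k\to 0$ uniformly on compacta, then $f_k\to 0$ weakly, so $\normw{T_\mu f_k}\to 0$, and
\[
\int_{\Bn}|f_k|^2\,d\mu = \vhw{T_\mu f_k, f_k} \le \normw{T_\mu f_k}\,\normw{f_k} \to 0,
\]
which is the vanishing Carleson property. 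Conversely, if $\mu$ is vanishing Carleson, the sharper form of the estimate from (i),
\[
\normw{T_\mu f_k} = \sup_{\normw{g}\le 1}|\vhw{T_\mu f_k, g}| \le C^{1/2}\Bigl(\int_{\Bn}|f_k|^2\,d\mu\Bigr)^{1/2},
\]
forces $\normw{T_\mu f_k}\to 0$ for every norm-bounded sequence $\{f_k\}$ with $f_k\to 0$ uniformly on compacta, which is precisely compactness of $T_\mu$.

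There is no serious obstacle: the entire argument is an exercise in Hilbert-space duality combined with the reproducing-kernel characterization of weak convergence. The only point requiring a moment of care is justifying that for a norm-bounded sequence in $\A2w$, uniform convergence on compact subsets of $\Bn$ is equivalent to weak convergence, which is exactly where the reproducing-kernel structure of the space is invoked.
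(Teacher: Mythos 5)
Your proposal is correct in substance, but it takes a genuinely more direct route than the paper for the two forward implications. The paper proves ``$T_\mu$ bounded $\Rightarrow$ $\mu$ Carleson'' by testing the Berezin-type quantity $\vhw{T_\mu h_a,h_a}$ on the explicit normalized kernels-substitutes $h_a$ of \eqref{5}, extracting the box estimate $\mu_k(Q_a)\lesssim 2^{-k}(1-\abs a)^n$, and then invoking Theorem \ref{Carleson} and Lemma \ref{lem*1}; similarly for compactness it routes through $2^k\mathcal C_{\mu_k}(H^2)\to 0$ and Theorem \ref{Carleson}(ii). You instead put $g=f$ in the duality identity and read off $\int_{\Bn}\abs f^2\,d\mu=\vhw{T_\mu f,f}\le\norm{T_\mu}\normw f^2$ directly, and likewise $\int_{\Bn}\abs{f_k}^2\,d\mu\le\normw{T_\mu f_k}\,\normw{f_k}\to 0$ for the vanishing case. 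This is cleaner and bypasses the Carleson box machinery entirely. The one point you should make explicit is why the identity $\vhw{T_\mu f,f}=\int_{\Bn}\abs f^2\,d\mu$ is legitimate for \emph{arbitrary} $f\in\A2w$ at a stage where you do not yet know $\int_{\Bn}\abs f^2\,d\mu<\infty$: the Fubini interchange behind that identity is exactly what is in question, and this is presumably why the paper tests only against the \emph{bounded} functions $h_a$ (for which $\int\abs{h_a}^2\,d\mu\le\norm{h_a}_\infty^2\mu(\Bn)<\infty$ and the interchange is immediate). Your argument is easily repaired --- apply the identity to the bounded dilates $f_r(z)=f(rz)$, note $\normw{f_r}\le\normw f$ by monotonicity of spherical means, and let $r\to 1$ using Fatou --- but as written it silently assumes the identity in the circular case. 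The converse implications (Carleson $\Rightarrow$ bounded, vanishing Carleson $\Rightarrow$ compact) coincide with the paper's, and your reliance on the equivalence of weak nullity with ``bounded plus locally uniformly null'' is exactly the paper's (unproved) Corollary 4.2, so you are on equal footing there. What your approach buys is independence from Theorem \ref{Carleson}; what the paper's buys is that it never needs the duality identity for functions not known a priori to be $\mu$-square-integrable.
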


Given $z\in \Bn$ and $0<\alpha<1$, we consider the Bergman metric ball
$$E(z,\alpha)=\set{w\in \Bn: \beta(z,w)<\alpha},$$
where $\beta(z,w)$ is the Bergman metric given by 
$$
\beta(z,w)=\dfrac12\log\dfrac{1+\abs{\varphi_z(w)}}{1-\abs{\varphi_z(w)}}, \qquad z, w \in \Bn.
$$
Here, $\varphi_z$ is the M\"obius transformation on $\Bn$ that interchanges $0$ and $z$.

 We know that $E(0,\alpha)$ is actually a Euclidean ball of radius $R=\tanh\alpha$, centered at the origin, and
$$ E(z,\alpha)=\varphi_z\bigl(E(0,\alpha)\bigr). $$
Moreover, for fixed $\alpha$, $v\bigl(E(z,\alpha)\bigr)\asymp (1-\abs z)^{n+1}$. See \cite[Chapter 1]{zhuball} for more details. 

For a measure $\mu$ on $\Bn$ and $\alpha>0$, we define the function $\widehat{\mu}_\alpha$ by
$$\widehat{\mu}_\alpha(z)=\dfrac{2^k\mu\bigl(E(z,\alpha)\bigr)}{(1-\abs z)^n},\qquad z\in \Omega_k.$$

Denote by $\widetilde{T_\mu}$ the Berezin transform  of $T_\mu$, and set 
$$d\lambdaw(z)=\dfrac{2^k \rho(z) dv(z)}{(1-\abs z)^n}, \qquad z\in \Omega_k.$$
\begin{theorem}\label{Schatten}
Let $\rho$ be in $S^*$, $\mu$ be a finite positive Borel measure and $1<p<\infty$. The following conditions are equivalent:
\begin{itemize}
\item[(a)] The Toeplitz operator $T_\mu$ is in the Schatten class $\mathcal S_p$.

\item[(b)] The function $\widetilde{T_\mu}$ is in $L^{p}(\Bn, d\lambdaw).$

\item[(c)] The function $\widehat{\mu}_\alpha$ is in $L^{p}(\Bn, d\lambdaw)$ for sufficiently small  $\alpha>0$.
\end{itemize}
\end{theorem}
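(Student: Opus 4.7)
The plan is to prove the equivalences via the chain (a) $\Rightarrow$ (b) $\Rightarrow$ (c) $\Rightarrow$ (a). The central geometric objects are the normalized reproducing kernels $k_z(w)=K_\rho(w,z)/\sqrt{K_\rho(z,z)}$ and the pointwise equivalence $K_\rho(z,z)\asym 2^k/(1-\abs z)^n$ for $z\in\Omega_k$, which can be extracted from the test-function estimates already developed for Theorem~\ref{Carleson}. This identification explains why $d\lambdaw$ is the correct reference measure: the family $\{k_z\}_{z\in\Bn}$ is a normalized continuous frame in the sense that $\int_{\Bn}\abs{\vhw{f,k_z}}^2 d\lambdaw(z)\asym\normw{f}^2$ for all $f\in\A2w$.

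For (a) $\Rightarrow$ (b), I write $\widetilde{T_\mu}(z)=\vhw{T_\mu k_z,k_z}$ and apply the standard convexity argument for positive Schatten-class operators (an instance of the McCarthy inequality combined with the frame property above) to obtain $\int\widetilde{T_\mu}^p\,d\lambdaw\lesssim \|T_\mu\|_{\mathcal{S}_p}^p$. For (b) $\Rightarrow$ (c), I use the pointwise estimates $\abs{K_\rho(z,w)}\asym K_\rho(z,z)$ and $1-\abs w\asym 1-\abs z$ valid for $w\in E(z,\alpha)$ with $z\in\Omega_k$, which yield
$$\widetilde{T_\mu}(z)=\int_{\Bn}\frac{\abs{K_\rho(z,w)}^2}{K_\rho(z,z)}\,d\mu(w)\gess K_\rho(z,z)\,\mu\bigl(E(z,\alpha)\bigr)\asym \widehat{\mu}_\alpha(z),$$
and then integrate against $d\lambdaw$.

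The main work lies in (c) $\Rightarrow$ (a). I would fix a small $\alpha>0$, choose a Bergman-metric lattice $\{a_{k,j}\}$ with $a_{k,j}\in\Omega_k$ and an associated bounded-overlap partition $\{D_{k,j}\}$ with $D_{k,j}\subset E(a_{k,j},\alpha)\cap\Omega_k$, and decompose $\mu=\sum_{k,j}\mu_{k,j}$ where $\mu_{k,j}=\chi_{D_{k,j}}\mu$. Each piece $T_{\mu_{k,j}}$ is essentially rank one, since $\mu_{k,j}$ is supported in a single Bergman ball: up to bounded factors $T_{\mu_{k,j}}\approx \mu_{k,j}(\Bn)\,k_{a_{k,j}}\otimes k_{a_{k,j}}$, giving $\|T_{\mu_{k,j}}\|_{\mathcal{S}_p}\asym \mu_{k,j}(\Bn)\,K_\rho(a_{k,j},a_{k,j})$, and the discretized sum $\sum_{k,j}\bigl(\mu_{k,j}(\Bn)\,K_\rho(a_{k,j},a_{k,j})\bigr)^p$ is equivalent to $\|\widehat{\mu}_\alpha\|_{L^p(d\lambdaw)}^p$ by the standard covering/sampling argument. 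The hard step is then the Schatten inequality $\|\sum_{k,j}T_{\mu_{k,j}}\|_{\mathcal{S}_p}^p\lesssim \sum_{k,j}\|T_{\mu_{k,j}}\|_{\mathcal{S}_p}^p$: for $p\ge 2$ this comes from a near-orthogonality estimate for the $k_{a_{k,j}}$'s using the off-diagonal decay of $K_\rho$, while for $1<p<2$ one reduces to the dual range $p'>2$ via the $\mathcal{S}_p$--$\mathcal{S}_{p'}$ duality and the positivity of $T_\mu$. The quantitative off-diagonal kernel estimates needed at this step are precisely what the hypothesis $\rho\in S^*$ (as opposed to the weaker $\rho\in S$) is designed to deliver, and establishing them with the sharp $S^*$ dependence is the main technical obstacle of the proof.
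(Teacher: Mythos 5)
Your implications (a) $\Rightarrow$ (b) and (b) $\Rightarrow$ (c) coincide with the paper's: the first is the standard positive-operator/Berezin-transform computation with respect to $d\lambdaw(z)\asymp K_\rho(z,z)\rho(z)\,dv(z)$ (your ``frame property'' is exactly the identity $\int_{\Bn}\abs{\vhw{f,K_z}}^2\rho\,dv=\normw{f}^2$ used in Lemma \ref{bd8}), and the second uses precisely the two local kernel estimates $K_\rho(z,z)\asymp 2^k(1-\abs z)^{-n}$ on $\Omega_k$ and $\abs{K_\rho(z,w)}^2\asymp K_\rho(z,z)K_\rho(w,w)$ for $w\in E(z,\alpha)$ from Proposition \ref{md7}. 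The problem is your (c) $\Rightarrow$ (a).

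The gap is the step you yourself flag as the main obstacle: the inequality $\norm{\sum_{k,j}T_{\mu_{k,j}}}_{\mathcal S_p}^p\lesssim\sum_{k,j}\norm{T_{\mu_{k,j}}}_{\mathcal S_p}^p$ fails for general positive summands when $p>1$ (the $\mathcal S_p$ triangle inequality points the wrong way), and rescuing it through near-orthogonality of the $k_{a_{k,j}}$ requires quantitative off-diagonal decay of $K_\rho(z,w)$ for $z,w$ far apart in the Bergman metric. No such estimates are available here: the paper controls $K_\rho$ only on the diagonal and on a single small Bergman ball, and obtaining global kernel decay for weights in $S^*$ on the ball is exactly the difficulty its method is built to avoid. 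Your attribution of the hypothesis $\rho\in S^*$ to off-diagonal decay is also misplaced; it enters only through the local sub-mean-value inequality $\abs{f(z)}^2\lesssim\frac{2^k}{(1-\abs z)^n}\int_{E(z,\alpha)}\abs{f(w)}^2\rho(w)\,dv(w)$ for $z\in\Omega_k$ (Lemma \ref{bd9}, which needs $\rho^*\lesssim\rho$). The paper's (c) $\Rightarrow$ (a) then bypasses lattices entirely: since $T_\mu\ge 0$ and $p>1$, it suffices to bound $\sum_l\vhw{T_\mu e_l,e_l}^p$ over orthonormal bases; writing $\vhw{T_\mu e_l,e_l}=\int_{\Bn}\abs{e_l}^2d\mu$, applying the pointwise estimate above, Fubini to get $\int_{\Bn}\abs{e_l}^2d\mu\lesssim\int_{\Bn}\abs{e_l}^2\widehat{\mu}_\alpha\rho\,dv$, H\"older against $\normw{e_l}=1$, and finally $\sum_l\abs{e_l(w)}^2=K_\rho(w,w)$ yields the bound by $\int_{\Bn}\widehat{\mu}_\alpha^p\,d\lambdaw$. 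To complete your version you would have to prove the off-diagonal kernel estimates and the $1<p<2$ duality reduction from scratch; the ONB argument is both simpler and the one that actually works in this setting.
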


\section{Proof of Theorem \ref{Carleson}}\label{sec3}

Given $a\in \Bn\setminus \set{0}$ and $r>0$. Let $\delta(a)=\sqrt{2(1-\abs a)}$. Define $Q(a,r)\subset \Bn$ and $O(a,r)\subset \Sn$ as follows:
$$
Q(a,r)=\{z\in \mathbb B_n: \sqrt{|1-\langle a/|a|, z\rangle|} < r\},$$
$$
O(a,r)= \{\zeta\in \mathbb S_n: \sqrt{|1-\langle a/|a|, \zeta\rangle|} < r\}.$$
For simplicity of notation, we write $Q_a$ instead of $Q\bigl(a,\delta(a)\bigr)$, $O_a$ instead of $O\bigl(a,\delta(a)\bigr)$.

We recall a well known characterization of Carleson measures for the Hardy space (see \cite{zhuball}): \textit{A positive Borel measure $\mu$ on $\Bn$ is a Carleson measure for $H^2$ if and only if $\mu(Q_a) \lesssim (1-\abs a)^{n}$ for all $a\in \Bn\setminus \set{0}$. Furthermore, $\mathcal C_\mu(H^2)\asymp \sup _{a\in \Bn\setminus \set{0}}\mu(Q_a)(1-\abs a)^{-n}.$}

We use the following covering lemma from \cite[Lemma 4.7]{zhuball}.

\begin{lemma}\label{lma}
Suppose $N$ is a natural number, $a_l \in \Bn\setminus \set{0}, 1\le l\le N$, $$E=\bigcup_{l=1}^{N} O_{a_l}.$$
There exists a subsequence $\{l_i\}, 1\le i\le M$, such that 
\begin{itemize}
\item[(a)] $O_{a_{l_i}}, 1\le i\le M$, are disjoint.
\item[(b)] $O\big(a_{l_i}, 3\delta(a_{l_i})\big), 1\le i\le M$, cover $E$.
\end{itemize}
\end{lemma}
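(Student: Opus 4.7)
The plan is a standard Vitali-type greedy selection. The key observation is that, in the notation of the excerpt, each $O_a$ is precisely the open ball of radius $\delta(a)$ centered at $a/\abs a$ for the non-isotropic ``metric'' $d(\zeta,\eta)=\sqrt{|1-\langle \zeta,\eta\rangle|}$ on $\Sn$. That $d$ satisfies the triangle inequality on $\Sn$ is a classical fact about the Koranyi metric, and the whole proof reduces to applying the usual Vitali covering argument in a metric space to this setting.

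First I would reindex the given points so that
$$\delta(a_1)\ge \delta(a_2)\ge \cdots \ge \delta(a_N).$$
Then I would build $\{l_i\}$ greedily: set $l_1=1$, and having chosen $l_1<l_2<\cdots<l_{i-1}$, let $l_i$ be the smallest index $l>l_{i-1}$ such that $O_{a_l}$ is disjoint from every one of $O_{a_{l_1}},\ldots,O_{a_{l_{i-1}}}$. Terminate when no such index exists. Conclusion (a) is immediate by construction.

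For (b), fix an arbitrary index $l$. If $l=l_i$ for some $i$, the containment $O_{a_l}\subset O\bigl(a_{l_i},3\delta(a_{l_i})\bigr)$ is trivial. Otherwise, by the greedy rule there is a selected $l_i$, necessarily with $l_i\le l$, such that $O_{a_l}\cap O_{a_{l_i}}\ne \emptyset$; the ordering by decreasing $\delta$ then forces $\delta(a_{l_i})\ge \delta(a_l)$. Choosing $\zeta_0\in O_{a_l}\cap O_{a_{l_i}}$ and using the triangle inequality for $d$ twice, any $\zeta\in O_{a_l}$ satisfies
$$d\bigl(\zeta, a_{l_i}/\abs{a_{l_i}}\bigr)\le d\bigl(\zeta, a_l/\abs{a_l}\bigr)+d\bigl(a_l/\abs{a_l},\zeta_0\bigr)+d\bigl(\zeta_0, a_{l_i}/\abs{a_{l_i}}\bigr)<2\delta(a_l)+\delta(a_{l_i})\le 3\delta(a_{l_i}),$$
so $O_{a_l}\subset O\bigl(a_{l_i},3\delta(a_{l_i})\bigr)$. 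Since $E$ is the union of the $O_{a_l}$, (b) follows.

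The main (and essentially only) obstacle is the triangle inequality for $d$ on $\Sn$; this is standard and can be quoted from \cite{zhuball}. The expansion factor $3$ is forced by using the triangle inequality twice together with the size comparison $\delta(a_l)\le \delta(a_{l_i})$, which in turn is what makes the greedy ordering by decreasing radius essential.
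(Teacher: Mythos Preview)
Your proof is correct; it is the standard Vitali covering argument for the non-isotropic balls on $\Sn$, and the triangle inequality for $d(\zeta,\eta)=\sqrt{|1-\langle\zeta,\eta\rangle|}$ is indeed available in \cite{zhuball}. The paper does not actually give its own proof of this lemma---it simply quotes it as \cite[Lemma~4.7]{zhuball}---so there is nothing further to compare; your write-up is essentially the argument behind that citation.
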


\begin{lemma}\label{lem*1}
Let $\mu$ be a finite positive measure on $\Bn$. Then $\mu_k$ is a Carleson measure for $H^2$ if and only if $\mu_k(Q_a)\less (1-\abs a)^n$ for all $a\in \Omega_k$. Furthermore, $\mathcal C_{\mu_k}(H^2)\asymp \sup _{a\in \Omega_k}(1-\abs a)^{-n}\mu_k(Q_a).$
\end{lemma}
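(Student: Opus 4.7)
My plan is to reduce to the classical Carleson characterization for $H^2$ recalled just before the lemma, which asserts $\mathcal{C}_\mu(H^2)\asymp\sup_{a\in\Bn\setminus\{0\}}(1-|a|)^{-n}\mu(Q_a)$. Applied to $\mu_k$ this immediately yields both the ``only if'' implication and the lower bound $\mathcal{C}_{\mu_k}(H^2)\gtrsim\sup_{a\in\Omega_k}(1-|a|)^{-n}\mu_k(Q_a)$, so the real content is to show that if $K:=\sup_{a\in\Omega_k}(1-|a|)^{-n}\mu_k(Q_a)<\infty$, then $\mu_k(Q_a)\lesssim K(1-|a|)^n$ for every $a\in\Bn\setminus\{0\}$, which combined with the classical characterization gives $\mathcal{C}_{\mu_k}(H^2)\lesssim K$.

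I will carry this out by a case analysis on $|a|$ relative to the shell $\Omega_k=\{r_k\le|z|<r_{k+1}\}$ on which $\mu_k$ is supported. For $a\in\Omega_k$ there is nothing to do. For $|a|\ge(1+r_{k+1})/2$, every $z\in Q_a$ satisfies $|z|\ge\mathrm{Re}\langle a/|a|,z\rangle>2|a|-1\ge r_{k+1}$, hence $Q_a\cap\Omega_k=\emptyset$ and $\mu_k(Q_a)=0$. For $r_{k+1}\le|a|<(1+r_{k+1})/2$, the auxiliary point $a_\lambda=\lambda\,a/|a|$ lies in $\Omega_k$ for any $\lambda\in[r_k,r_{k+1})$, and $\lambda\le|a|$ forces $Q_a\subset Q_{a_\lambda}$, so $\mu_k(Q_a)\le K(1-\lambda)^n$; letting $\lambda\to r_{k+1}^-$ and using $1-|a|>(1-r_{k+1})/2$ yields $\mu_k(Q_a)\le 2^n K(1-|a|)^n$.

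The main obstacle will be the remaining case $|a|<r_k$, which requires a covering argument respecting the Kor\'anyi anisotropy of the sets $Q_b$. Since $\mu_k$ is supported in $\Omega_k$, only the part of $Q_a$ with $|z|\ge r_k$ contributes. I will cover the spherical slice $O_a$ by balls $O_b$ with $|b|=r_k$ and apply Lemma \ref{lma} to extract a disjoint subfamily $\{O_{a_{l_i}}\}_{1\le i\le M}$ whose $3$-fold dilates still cover $O_a$; the volume estimate $\sigma(O(b,\delta(b)))\asymp(1-|b|)^n$ together with disjointness then gives $M\lesssim((1-|a|)/(1-r_k))^n$. The pointwise bound $|1-\langle a_{l_i}/|a_{l_i}|,z\rangle|\le(1-|z|)+|z|\,|1-\langle a_{l_i}/|a_{l_i}|,z/|z|\rangle|$, valid for $z\in Q_a$ with $|z|\ge r_k$, then shows $Q_a\cap\{|z|\ge r_k\}\subset\bigcup_i Q(a_{l_i},C\delta(a_{l_i}))$ for an absolute constant $C$. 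A second application of Lemma \ref{lma} covers each enlarged box by a bounded number of standard boxes $Q_b$, $b\in\Omega_k$, allowing me to invoke the hypothesis and conclude $\mu_k(Q(a_{l_i},C\delta(a_{l_i})))\lesssim K(1-r_k)^n$, so that $\mu_k(Q_a)\lesssim M\cdot K(1-r_k)^n\lesssim K(1-|a|)^n$. The crux of the argument is matching the combinatorial count $((1-|a|)/(1-r_k))^n$ coming from the anisotropic covering to the target scaling $(1-|a|)^n$; the other three cases reduce to elementary inclusions among the boxes $Q_a$ and the support constraint on $\mu_k$.
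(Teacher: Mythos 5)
Your proof is correct and, in the main case $|a|<r_k$, follows essentially the same route as the paper: cover the relevant part of $Q_a$ by boxes at the scale of $\Omega_k$, extract a disjoint subfamily via Lemma \ref{lma}, and count using the surface measure of the corresponding caps $O_b$. Your treatment of the range $|a|\ge r_{k+1}$ (splitting at $(1+r_{k+1})/2$ and using $Q_a\subset Q_{\lambda a/|a|}$ for $\lambda\le |a|$) is in fact more careful than the paper's one-line assertion that $\mu_k(Q_a)=0$ whenever $a\in\Omega_l$ with $l>k$; that assertion can fail for $a$ near the inner edge of $\Omega_{k+1}$, where $Q_a$ still meets $\Omega_k$, and your radial-monotonicity argument is exactly what handles it. The one step left implicit in both your sketch and the paper is that the hypothesis for the standard boxes $Q_b$, $b\in\Omega_k$, transfers to dilated boxes $Q\bigl(b,C\delta(b)\bigr)$ at the same scale; this is a routine covering argument with sufficiently small caps and only affects constants.
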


\begin{proof}
Let $a\in \Bn\setminus \set{0}$. Then $a\in \Omega_l$ for some $l\ge 1$. If $l > k$, then $\mu_k(Q_a)=0$ and there is nothing to prove. 
When $a\in \Omega_l,\, l\le k$, we can cover $Q_a \setminus r_{k}\mathbb B_n$ by a finite family  $\Lambda$  of  $Q_{a_l}$ with $a_l\in \Omega_{k-1}$. Denote by $\Lambda_0$ the sub-family of $\Lambda$  obtained by Lemma \ref{lma}. Then
$$\mu_k(Q_a) = \mu_k(Q_a \setminus r_{k}\mathbb B_n) \le \sum_{l\in \Lambda_0}\mu_k\Big(Q\big({a_l}, 3\delta(a_l)\big)\Big). $$
Since $a_l\in \Omega_{k-1}$, we have $\mu_k\Big(Q\big({a_l}, 3\delta(a_l)\big)\Big) \lesssim (1-|a_l|)^n\asymp \sigma(O_{a_l})$. Hence
$$\mu_k(Q_a)\lesssim \sum_{l\in\Lambda_0}\sigma(O_{a_l})=\sigma\Bigl(\bigcup_{l\in\Lambda_0}O_{a_l}\Bigr).$$
Finally,
$$\sigma \Bigl(\bigcup_{l\in\Lambda_0}O_{a_l}\Bigr) \lesssim \sigma(O_a) \asymp (1-|a|)^n.$$
Therefore $\mu_k(Q_a) \lesssim (1-|a|)^n.$ This completes the proof.
\end{proof}

\subsection{Proof of Part (i)}

$(\Longleftarrow)$ Since $\mu_k$ are Carleson measures for $H^2$ with Carleson constants $\lesssim 2^{-k}$, the same holds for $H^2$ on the smaller ball $r_{k+2}\Bn$. This means that
$$ \int_{\Omega_k}\abs{f(z)}^2d\mu(z)\lesssim 2^{-k}\int_{\Sn}\abs{f(r_{k+2}\xi)}^2d\sigma(\xi) $$
for an arbitrary function $f$ in $\A2w$ and for all $k.$ Summing this estimate over all $k\ge 1$ we get
$$ \int_{\Bn} \abs{f(z)}^2d\mu(z)\lesssim \sum_{k=1}^\infty 2^{-k}\int_{\Sn}\abs{f(r_{k+2}\xi)}^2d\sigma(\xi)\asymp \normw{f}^2.$$

$(\Longrightarrow)$ We just need to check that $\mu_k(Q_a)\less 2^{-k}(1-\abs a)^n$ when $a$ is in $\Omega_k, k\ge 0$. We use the test function 
\begin{equation}\label{2}
f_a(z)=(1-\vh{a,z})^{-\gamma}
\end{equation}
 with large $\gamma$. By \eqref{*}, we have
\begin{align*}
\normw{f_a}^2&\asymp\sum_{j=1}^\infty 2^{-j}\int_{\Sn}\dfrac{1}{\abs{1-\vh{a,r_j\xi}}^{2\gamma}}d\sigma(\xi)\\ 
& \asymp \sum_{j=1}^\infty \dfrac{2^{-j}}{ (1-r_j\abs a)^{2\gamma-n}}.
\end{align*}
Since $a\in \Omega_k$, by Property \eqref{1} we obtain
\begin{equation}\label{3}
\normw{f_a}^2\asymp 2^{-k} (1-\abs a)^{-2\gamma+n}.
\end{equation}

On the other hand, for every $z$ in $Q_a$ we have
\begin{align*}
\abs{1-\vh{a,z}}&= \abs{(1-\abs a)+\abs a(1-\vh{a/\abs a,z})}\\ 
& \le (1-\abs a)+\abs a \abs{1- \vh{a/\abs a,z}}\\
&< (1-\abs a) + 2\abs a (1-\abs a) \\
&\le 3 (1-\abs a).
\end{align*}
Hence, 
\begin{equation}\label{4}
\abs{f_a(z)} \gtrsim (1-\abs a)^{-\gamma}, \qquad z\in Q_a.
\end{equation}
Thus, 
$$ \int_{\Bn}\abs{f_a(z)}^2d\mu(z)\gtrsim  (1-\abs a)^{-2\gamma}\mu(Q_a\cap \Omega_k).$$
Since $\mu$ is a Carleson measure for $\A2w$, we get
$$ \mu(Q_a\cap \Omega_k)\lesssim 2^{-k} (1-\abs a)^{n}. $$
This implies that $\mu_k$ is a Carleson measure for Hardy space $H^2$ with Carleson constant $\mathcal C_{\mu_k}(H^2)\lesssim 2^{-k}$. 
\hfill $\Box$

\subsection{Proof of Part (ii)}

Suppose that $\mu$ is a vanishing Carleson measure for $\A2w$. 
Given $a$ in $\Omega_k$, consider the function $f_a$ defined by \eqref{2}. By \eqref{3}, $ \normw{f_a}^2  \asymp 2^{-k} (1-\abs a)^{-2\gamma+n}.$ Set 
\begin{equation}\label{5}
h_a(z)=\dfrac{(1-\vh{a,z})^{-\gamma}}{ 2^{-k/2} (1-\abs a)^{-\gamma+n/2}}.
\end{equation}
Then $\normw{h_a}^2  \asymp 1$ and by \eqref{4},
$$\abs{h_a(z)}^2\gess \dfrac{2^k}{(1-\abs a)^n}, \qquad z\in Q_a.$$
Since $\mu$ is a vanishing Carleson measure for $\A2w$ and $h_a$ tends to $0$ uniformly on compact subsets of the unit ball as $\abs a\to 1$, we have
$$
\lim _{\abs a \to 1}\int_{\Bn}\abs{h_a(z)}^2d\mu(z)=0.
$$
Thus, $\displaystyle\sup_{a\in \Omega_k}\dfrac{2^k\mu_k(Q_a\cap \Omega_k)}{(1-\abs a)^n}\to 0$ as $k\to \infty$. Hence, $\displaystyle\lim_{k\to \infty} 2^k\mathcal C_{\mu_k}(H^2)=0.$

Conversely, let $\mu^r=\mu|_{\Bn\setminus \overline{r\Bn}}$, where $r\Bn=\set{z\in \Bn:\abs z < r}$. Then part $(i)$ of Theorem \ref{Carleson} implies that 
$$\int_{\Bn}\abs{h(z)}^2d\mu^r(z)\le C_r\normw{h}^2, \qquad h\in \A2w,$$
where 
\begin{equation}\label{6}
C_r=\sup_{k: r_k>r}\mathcal C_{\mu_k}(H^2),\qquad \text{ and } \qquad
\lim_{r\to 1} C_{r}=0.
\end{equation}
Let $\set{f_k}$ be a bounded sequence in $\A2w$ converging uniformly to $0$ on compact subsets of $\Bn$. Let $\varepsilon >0$. By \eqref{6}, there exists $r_0\in (0,1)$ such that $C_{r}<\varepsilon$ for all $r\ge r_0$. Moreover, by the uniform convergence on compact subsets, we may choose $k_0\in \N$ such that $\abs{f_k(z)}^2<\varepsilon$ for all $k\ge k_0$ and $z\in \overline{r_0\Bn}$. It follows that
\begin{align*}
\int_{\Bn}\abs{f_k(z)}^2d\mu(z)&=\int_{\overline{r_0\Bn}}\abs{f_k(z)}^2d\mu(z) +\int_{\Bn\setminus \overline{r_0\Bn}}\abs{f_k(z)}^2d\mu(z)\\
		&<\varepsilon\mu(\overline{r_0\Bn})+\int_{\Bn}\abs{f_k(z)}^2d\mu^{r_0}(z)\\
		&\le \varepsilon\mu(\overline{r_0\Bn}) + C_{r_0}\normw{f_k}^2\\
		&\le \varepsilon C, \qquad k\ge k_0,
\end{align*}
for some positive constant $C$. Hence, $\mu$ is a vanishing Carleson measure for $\A2w$.
\hfill $\Box$

\section{Proof of Theorem \ref{Toeplitz}}\label{sec4}
\subsection{Proof of Part (i)}

$(\Longrightarrow)$ Given $a$ in $\Omega_k$, we define $h_a$ by \eqref{5}. 
Then 
$$\normw{h_a}^2  \asymp 1 \text{ and }\abs{h_a(z)}^2\gess 2^k(1-\abs a)^{-n}, \qquad z\in Q_a.$$ Consider the function
\begin{equation}\label{7}
\widetilde{ T_\mu}(a)=\vhw{T_\mu h_a,h_a}=\int_{\Bn}\abs{h_a}^2d\mu(z).
\end{equation}
Since $T_\mu$ is bounded, $A:= \sup_{a\in \Bn}\widetilde{ T_\mu}(a) <\infty.$ Then
\begin{align}
A&\ge \int_{\Bn}\abs{h_a(z)}^2d\mu(z) \ge \int_{\Bn}\abs{h_a(z)}^2d\mu_k(z)\notag\\
& \ge \int_{Q_a}\abs{h_a(z)}^2d\mu_k(z)\gess 2^k(1-\abs a)^{-n} \mu_k(Q_a). \label{8}
\end{align}
Hence, $\mu_k(Q_a)\lesssim 2^{-k}(1-\abs a)^{n}$ for every $a\in \Omega_k$. By Theorem \ref{Carleson} and Lemma \ref{lem*1}, $\mu$ is Carleson measure for $\A2w$.

$(\Longleftarrow)$ For every $f,g \in \A2w$ we have
$$\vhw{T_\mu f,g}=\int_{\Bn}f(z) \overline{g(z)}d\mu(z).$$
Then by Cauchy--Schwarz inequality, we get
\begin{align*}
\abs{\vhw{T_\mu f,g}} &\le \int_{\Bn}\abs{f(z)}\abs{g(z)}d\mu(z)\\
					&\le \left(\int_{\Bn}\abs{f(z)}^2d\mu(z)\right)^{\frac12} \left(\int_{\Bn}\abs{g(z)}^2d\mu(z)\right)^{\frac12}.
\end{align*}
Since $\mu$ is a Carleson measure for $\A2w$, there exists a positive constant $C$ such that
\begin{align*}
\int_{\Bn}\abs{f(z)}^2d\mu(z)&\le C \normw{f}^2,\\
\intertext{and}
\int_{\Bn}\abs{g(z)}^2d\mu(z)&\le C \normw{g}^2.
\end{align*}
Hence, 
$$\abs{\vhw{T_\mu f,g}} \le C \normw{f} \normw{g} \qquad \text{ for all } f, g\in \A2w.$$
Thus, $T_\mu$ is bounded on $\A2w$.
\hfill $\Box$

\subsection{Proof of part (ii)}
We need the following auxiliary results.
\begin{proposition}\label{md4}
Suppose that $f\in \A2w$ with $\rho\in S$. Then
\begin{equation}\label{9}
\abs{f(z)}^2\le \dfrac{C2^k}{(1-\abs z)^n}\normw f^2, \qquad z\in \Omega_k, k\ge 0,
\end{equation}
where $C$ is a positive constant independent of $k$ and $z$.
\end{proposition}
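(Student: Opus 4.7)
The plan is to dilate $f$ so as to invoke the classical $H^2(\Bn)$ pointwise bound, and then use the norm equivalence \eqref{*} to pass from the resulting $H^2$ norm back to $\normw{f}$. Concretely, for $z\in\Omega_k$ I fix the shell index $j=k+2$ (two shells beyond where $z$ sits), set $g(w):=f(r_{j}w)$, and exploit $f(z)=g(z/r_{j})$.

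Since $f\in H(\Bn)$ and $r_j<1$, the function $g$ is holomorphic in a neighborhood of $\overline{\Bn}$, hence lies in $H^2(\Bn)$ with
$$\norm{g}_{H^2}^2=\int_{\Sn}\abs{f(r_{j}\xi)}^2 d\sigma(\xi).$$
The norm equivalence \eqref{*} bounds any single term of the defining series by $\normw{f}^2$, so $\norm{g}_{H^2}^2 \lesssim 2^{k+2}\normw{f}^2 \asymp 2^k \normw{f}^2$. The standard reproducing-kernel estimate $\abs{g(w)}^2 \le (1-\abs w)^{-n}\norm{g}_{H^2}^2$ on $H^2(\Bn)$, applied at $w=z/r_{j}\in\Bn$, then yields
$$\abs{f(z)}^2 \lesssim \frac{2^k}{(r_{k+2}-\abs z)^n}\normw{f}^2.$$

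It remains to show $r_{k+2}-\abs z\gtrsim 1-\abs z$ for $z\in\Omega_k$; this is where the defining property \eqref{1} of the class $S$ is used. Writing $\lambda>1$ for the infimum in \eqref{1}, one iterates $(1-r_j)\ge\lambda(1-r_{j+1})$ twice: first, $r_{k+2}-r_{k+1}=(1-r_{k+1})-(1-r_{k+2})\ge(1-\lambda^{-1})(1-r_{k+1})$; second, $1-r_{k+1}\ge\lambda^{-1}(1-r_k)\ge\lambda^{-1}(1-\abs z)$ since $\abs z\ge r_k$. Combined with $\abs z<r_{k+1}$, these inequalities give the desired comparison. This short geometric step is the only place the specific structure of $S$ is exploited and constitutes the main technical content of the proof; everything else is a routine dilation together with the standard $H^2$ pointwise bound and the norm equivalence \eqref{*}.
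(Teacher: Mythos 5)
Your overall route is the same as the paper's: dilate to $g(w)=f(r_{k+2}w)$, apply a standard $H^2(\Bn)$ pointwise estimate at $z/r_{k+2}$, bound the resulting sphere integral by $2^{k}\normw{f}^2$ via \eqref{*}, and finish by comparing $r_{k+2}-\abs z$ with $1-\abs z$ using \eqref{1}. (The paper uses the invariant Poisson-kernel form of the pointwise bound from \cite[Corollary 4.5]{zhuball} rather than the reproducing-kernel inequality, but this is immaterial.) Everything up to the last step is fine.

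The last step, however, is exactly where your argument breaks. Condition \eqref{1} says $1-r_k\ge\lambda(1-r_{k+1})$, i.e. $1-r_{k+1}\le\lambda^{-1}(1-r_k)$; your ``second'' inequality $1-r_{k+1}\ge\lambda^{-1}(1-r_k)$ is the reverse of this, and it is not a consequence of membership in $S$: the class $S$ only bounds the ratio $(1-r_k)/(1-r_{k+1})$ from below, not from above, and one can construct $\rho\in S$ for which this ratio is unbounded (prescribe $r_k=1-4^{-a_k}$ with $a_{k+1}-a_k\ge 1/2$ but unbounded, then choose a positive continuous $\rho$ with $\int_{r_k}^{1}\rho=2^{-k}$). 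The trouble comes from discarding $\abs z$ twice: you lower-bound $r_{k+2}-\abs z$ by $r_{k+2}-r_{k+1}$ but then compare against $1-r_k\ge 1-\abs z$, which forces you to need two-sided control of the ratio. The fix is to keep $\abs z$ throughout: since $\abs z<r_{k+1}$, condition \eqref{1} gives $1-r_{k+2}\le\lambda^{-1}(1-r_{k+1})<\lambda^{-1}(1-\abs z)$, hence
\begin{equation*}
r_{k+2}-\abs z=(1-\abs z)-(1-r_{k+2})\ge(1-\lambda^{-1})(1-\abs z),
\end{equation*}
which is the comparison you need, obtained from a single application of \eqref{1} in the correct direction. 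With this replacement your proof is complete and coincides with the paper's.
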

\begin{proof}
Let $z\in \Omega_k$. Applying \cite[Corollary 4.5]{zhuball} to the function $g(z)=f(r_{k+2}z)$ at the point $\frac{z}{r_{k+2}}$, we obtain 
$$ \abs{f(z)}^2 \le \int_{\Sn}\abs{f(r_{k+2}\zeta)}^2\dfrac{(1-\abs{z/r_{k+2}}^2)^n}{\abs{1-\vh{z/r_{k+2},\zeta}}^{2n}}d\sigma(\zeta). $$
By \eqref{1}, $\abs{1-\vh{z/r_{k+2},\zeta}}\ge 1-\abs{\vh{z/r_{k+2},\zeta}}\ge 1-\frac{\abs z\abs \zeta}{r_{k+2}} = 1-\abs z/r_{k+2} \gess 1-\abs z$ for $z\in \Omega_k, \zeta \in \Sn$. Thus,
\begin{align*}
\abs{f(z)}^2&\less \int_{\Sn}\abs{f(r_{k+2}\zeta)}^2\dfrac{(1-\abs z^2)^n}{(1-\abs z)^{2n}}d\sigma(\zeta)\\
			& \le \dfrac{(1+\abs z)^n}{(1-\abs z)^n}\int_{\Sn}\abs{f(r_{k+2}\zeta)}^2d\sigma(\zeta)\\
			&\less \dfrac{2^k}{(1-\abs z)^n} 2^{-k}\int_{\Sn}\abs{f(r_{k+2}\zeta)}^2d\sigma(\zeta)\\
			&\le \dfrac{2^k}{(1-\abs z)^n}\sum_{j=1}^\infty 2^{-j}\int_{\Sn}\abs{f(r_{j+2}\zeta)}^2d\sigma(\zeta)\\
			&\lesssim \dfrac{2^k}{(1-\abs z)^n} \normw f^2,
\end{align*}
with constants independent of $k$ and $z$.
\end{proof}

\begin{corollary}
A sequence of functions $\set{f_k} \subset \A2w$ converges to $0$ weakly in $\A2w$ if and only if it is bounded in $\A2w$ and converges to $0$ uniformly on each compact subset of $\Bn$.
\end{corollary}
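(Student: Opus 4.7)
My plan is to leverage Proposition \ref{md4}, which shows both that each point evaluation $f\mapsto f(z)$ is a bounded linear functional on $\A2w$ and that the unit ball of $\A2w$ is locally uniformly bounded on $\Bn$, hence a normal family by Montel's theorem. With that tool in hand, both implications follow along standard lines.

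For the forward direction, I would first apply the Banach--Steinhaus (uniform boundedness) principle to conclude that weak convergence forces $\sup_k \normw{f_k}<\infty$. Since each point evaluation is a bounded linear functional, weak convergence to $0$ yields $f_k(z)\to 0$ for every $z\in\Bn$. The bound from Proposition \ref{md4} makes $\set{f_k}$ locally uniformly bounded on $\Bn$, so Montel's theorem applies: every subsequence has a further subsequence converging locally uniformly in $H(\Bn)$, and the pointwise convergence to $0$ forces the limit to be $0$. A standard subsequence argument then upgrades this to local uniform convergence of the full sequence.

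For the converse, assume $M:=\sup_k \normw{f_k}<\infty$ and $f_k\to 0$ uniformly on compact subsets of $\Bn$. I would check $\vhw{f_k,g}\to 0$ for each $g\in \A2w$ by a truncation argument. Given $\varepsilon>0$, pick $r\in(0,1)$ so that the tail $\int_{\Bn\setminus \overline{r\Bn}}\abs{g(z)}^2\rho(z)\,dv(z)<\varepsilon^2$, which is possible since $g\in L^2(\Bn,\rho\,dv)$. Then split
\[
\abs{\vhw{f_k,g}}\le \int_{\overline{r\Bn}}\abs{f_k(z)}\abs{g(z)}\rho(z)\,dv(z)+\int_{\Bn\setminus\overline{r\Bn}}\abs{f_k(z)}\abs{g(z)}\rho(z)\,dv(z).
\]
The first integral tends to $0$ as $k\to\infty$ by uniform convergence on the compact set $\overline{r\Bn}$, while Cauchy--Schwarz bounds the second by $M\varepsilon$. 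Sending $k\to\infty$ and then $\varepsilon\to 0$ concludes the argument.

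The only mildly delicate step is the Montel/normal-family extraction in the forward implication; everything else is a routine consequence of Proposition \ref{md4} together with the fact that the $L^2(\Bn,\rho\,dv)$-tail of any fixed $g\in\A2w$ vanishes as the truncation radius approaches $1$.
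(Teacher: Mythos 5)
Your proof is correct. The paper states this corollary without proof, as an immediate consequence of Proposition \ref{md4}, and your argument---uniform boundedness and bounded point evaluations plus a Montel/subsequence extraction for the forward direction, and truncation with Cauchy--Schwarz on the tail for the converse---is exactly the standard reasoning the authors intend.
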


\begin{proof}[Proof of part $(ii)$ of Theorem \ref{Toeplitz}]
Suppose that $T_\mu$ is compact on $\A2w$. We define $h_a, a\in \Bn$ by \eqref{5} and $\widetilde{T_\mu}$ by \eqref{7}. Then $\normw{h_a}^2 \asymp 1$ and $h_a$ converges uniformly to $0$ on compact subsets of $\Bn$ as $\abs a\to 1$. Since $T_\mu$ is compact, $\widetilde{T_\mu}(a)\to 0$ as $\abs a\to 1$. By \eqref{8} this implies that
$$ \sup _{a\in \Omega_k}\dfrac{2^k\mu_k(Q_a)}{(1-\abs a)^n}\to 0  \text{ as } k\to \infty.$$
Hence, 
$$\lim _{k\to \infty}2^{k}\mathcal C_{\mu_k}(H^2)=0.$$
By part $(ii)$ of Theorem \ref{Carleson}, $\mu$ is a vanishing Carleson measure for $\A2w$.

Conversely, assume that $\mu$ is a vanishing Carleson measure for $\A2w$. For every $h\in \A2w$ we have 
$$\normw{T_\mu h}= \sup_{\substack{g\in \A2w \\ \normw{g} \le 1 }} \abs{\vhw{T_\mu h,g}}.$$
Furthermore, 
\begin{align*}
\abs{\vhw{T_\mu h,g}}&=\left|\int_{\Bn}h(z)\overline{g(z)}d\mu(z)\right|\le \int_{\Bn}\abs{h(z)}\abs{g(z)}d\mu(z) \\
			&\le \left (\int_{\Bn}\abs{h(z)}^2d\mu(z)\right )^{1/2} \left (\int_{\Bn}\abs{g(z)}^2d\mu(z)\right )^{1/2}\\
			&\lesssim \left (\int_{\Bn}\abs{h(z)}^2d\mu(z)\right )^{1/2} \normw{g}.
\end{align*}
The last inequality follows from the fact that $\mu$ is a Carleson measure for $\A2w$. Therefore,
$$\normw{T_\mu h}\lesssim \left (\int_{\Bn}\abs{h(z)}^2d\mu(z)\right )^{1/2}, \qquad h\in \A2w. $$
Now, let $\set{f_k}\subset \A2w$ be bounded and converge uniformly to $0$ on compact subsets of $\Bn$. Since $\mu$ is a vanishing Carleson measure for $\A2w$, 
$$\lim _{k\to \infty}\int_{\Bn}\abs{f_k(z)}^2d\mu(z)=0.$$ 
It follows that $\normw{T_\mu f_k}\to 0$ and hence $T_\mu$ is compact.
\end{proof}

\section{Proof of Theorem \ref{Schatten}}\label{sec5}
\begin{proposition} \label{md7}
Let $K_\rho(z,w)$ be the reproducing kernel of $\A2w$.
\begin{itemize}
\item[(a)] Let  $k\ge 1$, $z\in \Omega_k$. Then
\begin{equation}\label{10}
	K_\rho(z,z)\asymp \dfrac{2^k}{(1-\abs z)^n}.
\end{equation}
\item[(b)] There exists $\alpha=\alpha(\rho)>0$ such that for every $z\in \Bn$,
\begin{equation}\label{11}
\abs{K_\rho(z,w)}^2\asymp K_\rho(z,z) K_\rho(w,w)
\end{equation}
whenever $w\in E(z,\alpha)$.
\end{itemize}
\end{proposition}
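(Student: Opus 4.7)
For part (a), the upper bound $K_\rho(z,z) \lesssim 2^k/(1-\abs z)^n$ is immediate from Proposition \ref{md4} together with the extremal identity $K_\rho(z,z) = \sup\set{\abs{f(z)}^2 : f \in \A2w,\ \normw{f} \le 1}$. For the matching lower bound I plan to reuse the normalized test function $h_z$ defined by (\ref{5}) with $a = z$: estimate (\ref{3}) already gives $\normw{h_z}^2 \asym 1$, while a direct evaluation at $z$ yields $\abs{h_z(z)}^2 \asym 2^k(1-\abs z)^{-n}$, and the extremal property then delivers $K_\rho(z,z) \gess \abs{h_z(z)}^2/\normw{h_z}^2$ as required.

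For part (b), writing $K_z(\cdot) := K_\rho(\cdot,z)$, the inequality $\abs{K_\rho(z,w)}^2 \le K_\rho(z,z) K_\rho(w,w)$ is just Cauchy--Schwarz applied to $\abs{\vhw{K_w,K_z}}^2$. The nontrivial direction is the reverse. I would work with the normalized kernel $k_z := K_z/\sqrt{K_\rho(z,z)}$, which satisfies $\normw{k_z} = 1$, $k_z(z) = \sqrt{K_\rho(z,z)}$, and $\abs{k_z(w)} = \abs{K_\rho(z,w)}/\sqrt{K_\rho(z,z)}$. It is then enough to show that $\abs{k_z(w) - k_z(z)} \le \tfrac12 \sqrt{K_\rho(z,z)}$ for $w \in E(z,\alpha)$ with $\alpha$ small; combined with the companion estimate $K_\rho(w,w) \asym K_\rho(z,z)$ on $E(z,\alpha)$ (which follows from part (a) once the argument below is in place), this yields (\ref{11}).

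To produce the deviation estimate I would transplant to the origin: $F(u) := k_z(\varphi_z(u))$ is holomorphic on $\Bn$, and a slicewise one-dimensional Schwarz--type bound gives $\abs{F(u) - F(0)} \lesssim \abs u \sup_{\abs v \le 1/2}\abs{F(v)}$ for $\abs u$ small. The supremum is taken over a fixed Bergman ball $E(z,\beta_0)$, and Proposition \ref{md4} applied to the norm-one function $k_z$ yields the pointwise bound $\abs{k_z(w')}^2 \lesssim 2^{k'}/(1-\abs{w'})^n$ whenever $w' \in \Omega_{k'}$. The main obstacle is then to pass from this pointwise estimate to a bound that is uniformly comparable to $K_\rho(z,z)$ over $w' \in E(z,\beta_0)$; this requires $2^{k'} \asym 2^k$ and $(1-\abs{w'}) \asym (1-\abs z)$ on such a ball. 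The second comparability is a standard Bergman-metric fact, while the first is exactly where property (\ref{1}) of the class $S$ intervenes: it forces $1-r_k$ and $1-r_{k+1}$ to be comparable up to a fixed geometric ratio, so that a Bergman ball of fixed radius meets only boundedly many successive annuli $\Omega_{k'}$. Once both comparabilities are secured, the Schwarz estimate reads $\abs{k_z(w) - k_z(z)} \lesssim \alpha\sqrt{K_\rho(z,z)}$ on $E(z,\alpha)$, which closes both the lower bound on $\abs{K_\rho(z,w)}^2$ and the equivalence $K_\rho(z,z) \asym K_\rho(w,w)$.
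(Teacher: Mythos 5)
Your proof is correct, and while part (a) coincides with the paper's argument (upper bound from Proposition \ref{md4}, lower bound from the extremal property of $K_\rho(z,z)=\norm{L_z}^2$ tested against $h_z$ from \eqref{5}), your part (b) takes a genuinely different route. The paper works with the orthogonal decomposition $\A2w=\A2w(z_0)\oplus\L_{z_0}$, so that $K_\rho(w,w)=K_{\rho,z_0}(w,w)+\abs{k_{\rho,z_0}(w)}^2$, and then shows via the Schwarz lemma (applied uniformly to all unit-norm functions vanishing at $z_0$, transplanted by $\varphi_{z_0}$) that the subspace kernel satisfies $K_{\rho,z_0}(w,w)<\tfrac12 K_\rho(w,w)$ on $E(z_0,\alpha)$; this yields $\abs{K_\rho(z_0,w)}^2>\tfrac12K_\rho(z_0,z_0)K_\rho(w,w)$ in one stroke, without invoking part (a). You instead estimate the oscillation of the single function $k_z\circ\varphi_z$ near the origin, again by a Schwarz-lemma gain of a factor $\abs u$ combined with the pointwise bound of Proposition \ref{md4} applied to the unit-norm function $k_z$, and then convert $\abs{K_\rho(z,w)}^2\ge\tfrac14K_\rho(z,z)^2$ into \eqref{11} using $K_\rho(w,w)\asym K_\rho(z,z)$ — which, as you should note, follows from part (a) together with the comparabilities alone, not from "the argument below". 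Both proofs rest on the same two pillars (Proposition \ref{md4} and property \eqref{1}, the latter ensuring that a Bergman ball of fixed radius meets only boundedly many annuli $\Omega_{k'}$, so that $2^{k'}\asym 2^k$ and $1-\abs{w'}\asym 1-\abs z$ there); your version is arguably more elementary in that it avoids the reproducing kernel of the subspace $\A2w(z_0)$, at the modest cost of needing part (a) and the extra comparability $K_\rho(w,w)\asym K_\rho(z,z)$ as separate inputs, and of requiring the annulus-counting argument on a fixed-radius ball $E(z,\beta_0)$ rather than only on the small ball $E(z,\alpha)$.
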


\begin{proof}
$(a)$ Fix $k\ge 1$. Given $z\in \Omega_k$, let $L_z$ be the point evaluation at $z$ on $\A2w$. It is well known that 
$$K_\rho(z,z)=\norm{L_z}^2.$$
By Proposition \ref{md4}, $$\norm{L_z}^2\lesssim \dfrac{2^k}{(1-\abs z)^n}.$$ 
Furthermore, choosing $h_z$ by \eqref{5}, we have $\normw{h_z}\asymp 1$ and  $$\abs{h_z(z)}^2\gtrsim \dfrac{2^k}{(1-\abs z)^n}.$$ Hence,
$$\norm{L_z}^2\gtrsim \dfrac{2^k}{(1-\abs z)^n}.$$
Thus $$K_\rho(z,z)\asymp \dfrac{2^k}{(1-\abs z)^n},  \qquad z \in \Omega_k.$$

$(b)$ It is well known that 
$$\abs{K_\rho(z,w)}^2\le K_\rho(z,z) K_\rho(w,w)$$
for all $z, w\in \Bn.$
For any fixed $z_0\in \Omega_k$, consider the subspace $\A2w(z_0)$  defined as
$$\A2w(z_0)=\set{f\in \A2w: f(z_0)=0}.$$ 
Denote by $\L_{z_0} $ the one-dimensional subspace spanned by the function 
$$k_{\rho,z_0}(z)=\dfrac{K_\rho(z, z_0)}{\sqrt{K_\rho(z_0, z_0)}}.$$
Then we have the orthogonal decomposition
$$\A2w=\A2w(z_0) \oplus \L_{z_0}.$$
Hence $K_\rho(z,w)=K_{\rho,z_0}(z,w)+\overline{k_{\rho,z_0}(w)}k_{\rho,z_0}(z)$, where $K_{\rho,z_0}$ is the reproducing kernel of $\A2w(z_0)$.
Therefore,
$$K_\rho(z_0, w)=\overline{k_{\rho,z_0}(w)}k_{\rho,z_0}(z_0)$$
and
\begin{equation}\label{12}
K_\rho(w,w)=K_{\rho,z_0}(w,w)+\abs{k_{\rho,z_0}(w)}^2.
\end{equation}
We are going to prove that there exist $\alpha>0$ such that 
\begin{equation}\label{13}
K_{\rho,z_0}(w,w)<\dfrac12 K_\rho(w,w), \qquad w\in E(z_0,\alpha).
\end{equation}
By \eqref{1}, there exists $\alpha_1 >0$ such that $E(z_0,\alpha)\subset \Omega_{k-1}\cup \Omega_k\cup \Omega_{k+1}, 0<\alpha<\alpha_1$. Hence, for every $f\in \A2w(z_0)$ such that $\normw f =1$, by Proposition \ref{md4} we have 
\begin{equation}\label{14}
\abs{f(w)}^2\less \dfrac{2^k}{(1-\abs w)^n}\asymp \dfrac{2^k}{(1-\abs{z_0})^n}
\end{equation}
whenever $w\in E(z_0,\alpha)$. Since $E(z_0,\alpha)=\varphi_{z_0}\bigl(E(0,\alpha)\bigr)$, we can rewrite \eqref{14} as
\begin{equation}\label{15}
\abs{f\bigl(\varphi_{z_0}(\eta)\bigr)}^2\less \dfrac{2^k}{(1-\abs{z_0})^n}
\end{equation}
whenever $\eta \in E(0,\alpha)$.
Note that $f(z_0)=f\bigl(\varphi_{z_0}(0)\bigr)=0$. Therefore, by the Schwarz lemma, we get
\begin{equation*}
\abs{f\bigl(\varphi_{z_0}(\eta)\bigr)}^2\less \abs \eta ^2 \dfrac{2^k}{(1-\abs{z_0})^n} \asymp \abs \eta ^2 \dfrac{2^k}{(1-\abs{\varphi_{z_0}(\eta)})^n}
\end{equation*}
whenever $\eta \in E(0,\alpha)$.
This implies that there is a constant $C>0$ such that 
\begin{equation*}
\abs{f\bigl(\varphi_{z_0}(\eta)\bigr)}^2 \le C \abs \eta ^2 \dfrac{2^k}{(1-\abs{\varphi_{z_0}(\eta)})^n}, \qquad \eta \in E(0,\alpha).
\end{equation*}
Therefore, we can choose $\alpha$ so small that
\begin{equation*}
\abs{f\bigl(\varphi_{z_0}(\eta)\bigr)}^2 < \dfrac12 K_\rho\bigl(\varphi_{z_0}(\eta),\varphi_{z_0}(\eta)\bigr), \qquad \eta \in E(0,\alpha).
\end{equation*}
This proves \eqref{13}.

Now, from \eqref{12} and \eqref{13}, we obtain that $\abs{k_{\rho,z_0}(w)}^2 > \dfrac12 K_\rho(w,w)$ whenever $w\in E(z_0,\alpha)$. This means that
 $$\abs{K_\rho(w,z_0)}^2>\dfrac12 K_\rho(z_0,z_0) K_\rho(w,w)$$
 whenever $ w\in E(z_0,\alpha)$, which completes the proof.
 \end{proof}

\begin{lemma}\label{bd8}
Let $T$ be a positive operator on $\A2w$, and let $\widetilde T$ be the Berezin transform of $T$, defined by
$$\widetilde{T}(z)=\vhw{Tk_z, k_z}, \qquad z\in \Bn.$$
\begin{itemize}
\item[(a)] Let $0<p\le 1$. If $\widetilde{T}\in L^{p}(\Bn, d\lambdaw)$, then $T$ is in $\mathcal S_p.$
\item[(b)] Let $p\ge 1$. If $T$ is in $\mathcal S_p$, then $\widetilde T \in L^{p}(\Bn, d\lambdaw).$
\end{itemize}
Here, $d\lambdaw(z)=\dfrac{2^k \rho(z)dv(z)}{(1-\abs z)^n}$ if $z\in \Omega_k.$
\end{lemma}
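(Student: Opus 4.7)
My plan rests on a common ingredient, drawn from Proposition \ref{md7}(a), namely
\begin{equation*}
\frac{d\lambdaw(z)}{K_\rho(z,z)} \asym \rho(z)\,dv(z).
\end{equation*}
In particular, for every $f\in\A2w$,
\begin{equation*}
\int_{\Bn}\abs{\vhw{f,k_z}}^2\,d\lambdaw(z) = \int_{\Bn}\frac{\abs{f(z)}^2}{K_\rho(z,z)}\,d\lambdaw(z) \asym \normw{f}^2,
\end{equation*}
so the weakly defined positive operator $A:=\int_{\Bn}(k_z\otimes k_z)\,d\lambdaw(z)$ satisfies $A\asym \id$ on $\A2w$. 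This reproducing-type identity does the heavy lifting in both directions.

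For part (b), I would spectrally decompose the positive operator as $T=\sum_n \lambda_n\, e_n\otimes e_n$ along an orthonormal basis $\{e_n\}$ of $\A2w$. Then
\begin{equation*}
\widetilde{T}(z)=\sum_n \lambda_n\abs{\vhw{k_z,e_n}}^2,
\end{equation*}
and $\sum_n\abs{\vhw{k_z,e_n}}^2=\normw{k_z}^2=1$, so $\widetilde T(z)$ is a convex combination of the $\lambda_n$. Since $t\mapsto t^p$ is convex for $p\ge 1$, Jensen's inequality gives $\widetilde T(z)^p\le \sum_n\lambda_n^p\abs{\vhw{k_z,e_n}}^2$; integrating against $d\lambdaw$ and invoking the common ingredient above yields
\begin{equation*}
\int_{\Bn}\widetilde T^p\,d\lambdaw \lesssim \sum_n \lambda_n^p\,\normw{e_n}^2 = \|T\|_{\mathcal S_p}^p.
\end{equation*}

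For part (a), the factorization
\begin{equation*}
T^{1/2}A\,T^{1/2}=\int_{\Bn}(T^{1/2}k_z)\otimes(T^{1/2}k_z)\,d\lambdaw(z)
\end{equation*}
combined with $A\asym \id$ gives $T\lesssim T^{1/2}A\,T^{1/2}$. Monotonicity of the Schatten $p$-norm on positive operators (via the min-max characterization of singular values, valid for every $p>0$) then forces $\|T\|_{\mathcal S_p}\lesssim \|T^{1/2}A\,T^{1/2}\|_{\mathcal S_p}$. Finally, the $p$-quasi-triangle inequality for integrals of positive operators, valid for $0<p\le 1$, applied to the rank-one integrand whose $\mathcal S_p$-norm is $\normw{T^{1/2}k_z}^2=\vhw{Tk_z,k_z}=\widetilde T(z)$, yields
\begin{equation*}
\|T^{1/2}A\,T^{1/2}\|_{\mathcal S_p}^p\le \int_{\Bn}\widetilde T(z)^p\,d\lambdaw(z).
\end{equation*}

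The main obstacle is the resolution-of-identity estimate $A\asym \id$, which depends essentially on the kernel estimate $K_\rho(z,z)\asym 2^k/(1-\abs z)^n$ from Proposition \ref{md7}(a) and on the very definition of $d\lambdaw$. The remaining ingredients are classical: Jensen's inequality in (b), and the $p$-quasi-triangle inequality for operator-valued integrals together with Schatten monotonicity in (a). Minor technical care is required to justify the weak convergence of the operator-valued integrals (through finite-rank truncations $T_N=T^{1/2}P_N T^{1/2}$, say) and to validate the quasi-triangle inequality via Riemann-sum approximation, but these are routine.
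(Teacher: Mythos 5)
Your part (b) is correct and is in substance the paper's own argument: the identity $\mathrm{tr}(T^p)\asym\int_{\Bn}\vhw{T^pk_z,k_z}\,d\lambdaw(z)$ (the same Fubini/Parseval computation, using $d\lambdaw(z)\asym K_\rho(z,z)\rho(z)dv(z)$ from Proposition \ref{md7}(a)), combined with the pointwise inequality $\vhw{T^pk_z,k_z}\ge\vhw{Tk_z,k_z}^p$ for $p\ge1$; your Jensen argument on the spectral decomposition is exactly the proof of that inequality, so nothing is gained or lost there.

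Part (a) has a genuine gap. The step you rely on --- ``the $p$-quasi-triangle inequality for integrals of positive operators, valid for $0<p\le1$,'' i.e.\ $\bigl\lVert\int X_z\,d\lambda(z)\bigr\rVert_{\mathcal S_p}^p\le\int\lVert X_z\rVert_{\mathcal S_p}^p\,d\lambda(z)$ --- is false as a general statement, and your proposed validation by Riemann sums is precisely where it breaks. Rotfel'd's inequality gives $\bigl\lVert\sum_j\lambda(E_j)X_{z_j}\bigr\rVert_{\mathcal S_p}^p\le\sum_j\lambda(E_j)^{p}\lVert X_{z_j}\rVert_{\mathcal S_p}^p$, with the exponent $p$ on the weights $\lambda(E_j)$; since $p<1$ and $\lambda(E_j)\to0$ under refinement, the right-hand side does not converge to $\int\lVert X_z\rVert_{\mathcal S_p}^p\,d\lambda$ --- it blows up. A scaling counterexample to the continuous inequality itself: take $X_z=u\otimes u$ for a fixed unit vector $u$ and $\lambda$ of total mass $\varepsilon<1$; then the left side is $\varepsilon^p$ and the right side is $\varepsilon$, and $\varepsilon^p>\varepsilon$. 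The discrete version survives in the Bergman-space literature only because one sums over a lattice of Bergman balls each of which has $d\lambdaw$-measure $\asym1$, which is exactly the regime where the homogeneity mismatch is harmless; passing from that to the integral requires an additional (nontrivial) argument that $\widetilde T$ or $\widehat\mu_\alpha$ is essentially constant on such balls. The paper's route avoids all of this: it proves (a) by the same trace identity as in (b) together with the reverse pointwise inequality $\vhw{T^pk_z,k_z}\le\vhw{Tk_z,k_z}^p$ for $0<p\le1$ (Jensen for the concave function $t\mapsto t^p$ against the spectral measure of $T$ at $k_z$; see \cite[Proposition 1.31]{zhudisk}), so that $\mathrm{tr}(T^p)\less\int_{\Bn}\widetilde T(z)^p\,d\lambdaw(z)<\infty$. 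I recommend replacing your part (a) by that one-line argument; your operator $A\asym\id$ and the monotonicity step are fine but are not needed.
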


\begin{proof}
Note that $d\lambdaw(z)\asymp K(z,z) \rho(z)dv(z)=\norm{K_z}^2\rho(z)dv(z)$. 

The proof is similar to the proof of  \cite[Lemma 4.2]{Arroussi2015}.
The positive operator $T$ is in $\mathcal S_p$ if and only if $T^p$ is in the trace class $\mathcal S_1$. Fix an orthonormal basis $\set{e_k}$ of $\A2w$. Since $T^p$ is positive, it is in $S_1$ if and only if $\sum_k\vhw{T^pe_k, e_k}<\infty.$ Let $U=\sqrt{T^p}$. By Fubini's theorem,  the reproducing property of $K_z$, and Parseval's identity, we have
\begin{align*}
\sum_k\vhw{T^pe_k, e_k}
			&=\sum_k\normw{Ue_k}^2=\sum_k\int_{\Bn}\abs{Ue_k(z)}^2\rho(z)dv(z)\\
			&=\int_{\Bn}\left (\sum_k\abs{Ue_k(z)}^2\right )\rho(z)dv(z)\\
			&=\int_{\Bn}\left (\sum_k\abs{\vhw{Ue_k,K_z}}^2\right )\rho(z)dv(z)\\
			&=\int_{\Bn}\left (\sum_k\abs{\vhw{e_k,UK_z}}^2\right )\rho(z)dv(z)\\
			&=\int_{\Bn}\normw{UK_z}^2\rho(z)dv(z)\\
			&=\int_{\Bn}\vhw{T^pK_z, K_z}\rho(z)dv(z)\\
			&=\int_{\Bn}\vhw{T^pk_z, k_z}\normw{K_z}^2\rho(z)dv(z)\\
			&\asymp \int_{\Bn}\vhw{T^pk_z, k_z}d\lambdaw(z).
\end{align*}
Hence, both (a) and (b) are the consequences of the well known inequalities (see \cite[Proposition 1.31]{zhudisk})
\begin{align*}
\vhw{T^pk_z, k_z}&\le \vhw{Tk_z, k_z}^p=\bigl(\widetilde T(z)\bigr)^p, \qquad  0<p\le 1,\\
\vhw{T^pk_z, k_z}&\ge \vhw{Tk_z, k_z}^p=\bigl(\widetilde T(z)\bigr)^p, \qquad p\ge 1. \qedhere
\end{align*}
\end{proof}

\begin{lemma}\label{bd9}
Let $\rho \in S^*$ and $z\in \Omega_k$. Then there exists $\alpha_0>0$ such that for every $\alpha\in (0,\alpha_0)$ we have
$$\abs{f(z)}^2\less \dfrac{2^k}{(1-\abs z)^n}\int_{E(z, \alpha)}\abs{f(w)}^2\rho(w)dv(w)$$
for all $f\in H(\Bn)$.
\end{lemma}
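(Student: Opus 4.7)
The plan is to combine a standard sub-mean value estimate for holomorphic functions over Bergman metric balls with a pointwise lower bound for $\rho$ on such balls, the latter being where the $S^*$ hypothesis enters decisively.

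First I would choose $\alpha_0>0$ small enough that $E(z,\alpha)\subset\Omega_{k-1}\cup\Omega_k\cup\Omega_{k+1}$ whenever $z\in\Omega_k$ and $\alpha\in(0,\alpha_0)$; such an $\alpha_0$ exists by property \eqref{1} and was already exploited in the proof of Proposition \ref{md7}(b). On such a ball, the standard sub-mean value inequality---obtained from subharmonicity of $|f\circ\varphi_z|^2$, a change of variables $w=\varphi_z(\eta)$, and the Jacobian asymptotics $|J_{\varphi_z}(\eta)|^2\asymp(1-\abs z)^{n+1}$ on the bounded set $E(0,\alpha)$---yields
$$\abs{f(z)}^2\lesssim\frac{1}{(1-\abs z)^{n+1}}\int_{E(z,\alpha)}\abs{f(w)}^2\,dv(w).$$

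Next I would show that for every $w\in E(z,\alpha)$,
$$\rho(w)\gtrsim\frac{2^{-k}}{1-\abs z}.$$
Indeed, if $w\in\Omega_j$ with $\abs{j-k}\le 1$, then \eqref{1} gives $1-\abs w\asymp 1-r_j\asymp 1-\abs z$, while from the definition of $r_j$ one has $\int_{\abs w}^{1}\rho(t)\,dt\asymp 2^{-j}\asymp 2^{-k}$. Invoking the $S^*$ hypothesis $\rho(w)\gtrsim\rho^*(\abs w)=\tfrac{1}{1-\abs w}\int_{\abs w}^1\rho(t)\,dt$ converts this average into the desired pointwise lower bound.

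Substituting the lower bound on $\rho$ into the sub-mean value inequality absorbs an extra factor of $1-\abs z$ and produces exactly the claimed constant $2^k/(1-\abs z)^n$ in front of $\int_{E(z,\alpha)}\abs{f(w)}^2\rho(w)\,dv(w)$. The hardest part, and the one in which $S^*$ (not merely $S$) is essential, is the second step: \emph{a priori} the weight $\rho$ need not be even comparable at two distinct points of $E(z,\alpha)$, and only the hypothesis $\rho^*\lesssim\rho$ allows one to pass from the integral average $\rho^*(\abs w)$ to a pointwise lower bound for $\rho(w)$. This is precisely the structural reason why the Schatten-class characterization of Theorem \ref{Schatten} is stated for $S^*$ rather than for the larger class $S$.
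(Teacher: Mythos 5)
Your proof is correct and follows essentially the same route as the paper's: the sub-mean value inequality over $E(z,\alpha)$ together with $v\bigl(E(z,\alpha)\bigr)\asymp(1-\abs z)^{n+1}$, the containment $E(z,\alpha)\subset\Omega_{k-1}\cup\Omega_k\cup\Omega_{k+1}$ forced by \eqref{1}, the resulting estimate $2^{-k}\lesssim\int_{\abs w}^1\rho(t)\,dt$, and finally the $S^*$ hypothesis to replace $\rho^*$ by $\rho$. The only cosmetic difference is that you package everything as a pointwise lower bound $\rho(w)\gtrsim 2^{-k}/(1-\abs z)$ before integrating, whereas the paper rewrites the integrand step by step; note only that the comparison $1-\abs w\asymp 1-\abs z$ on $E(z,\alpha)$ is a standard Bergman-metric-ball fact rather than a consequence of \eqref{1}.
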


\begin{proof} 
 Let $z\in \Omega_k$. For each $f\in H(\Bn)$, by the subharmonicity of the function $w\mapsto \abs{f(w)}^2$ and the estimate $v\bigl(E(z,\alpha)\bigr)\asym (1-\abs z)^{n+1}$, we have
 \begin{equation*}
 \abs{f(z)}^2\lesssim \dfrac{1}{(1-\abs z)^{n+1}}\int_{E(z, \alpha)}\abs{f(w)}^2dv(w).
 \end{equation*}
 It is easy to see that $1-\abs z\asymp 1-\abs w$ for $w\in E(z, \alpha)$. Hence,
 \begin{align} \label{16}
 \abs{f(z)}^2
 			&\lesssim \dfrac{1}{(1-\abs z)^{n}}\int_{E(z, \alpha)}\abs{f(w)}^2\dfrac{1}{1-\abs w}dv(w) \notag\\
 			&=\dfrac{2^k}{(1-\abs z)^{n}}\int_{E(z, \alpha)}\abs{f(w)}^2\dfrac{2^{-k}}{1-\abs w}dv(w).
 \end{align}
 By \eqref{1}, for small $\alpha_0$ we have $E(z,\alpha_0)\subset \Omega_{k-1}\cup \Omega_k\cup \Omega_{k+1}$. Therefore, for every $\alpha\in (0,\alpha_0)$, we have $r_{k-1}<\abs w< r_{k+2}$ for $w\in E(z,\alpha)$.
Since $\int_{r_{k+2}}^1\rho(t)dt=2^{-k-2}$, we obtain
 $ 2^{-k} \lesssim \int_{\abs w}^1\rho(t)dt $
 for every $ w\in E(z,\alpha), \alpha\in (0,\alpha_0)$.
 Plugging this into \eqref{16} and using that $\rho^*(w)\less \rho(w)$, we get
 \begin{align*}
 \abs{f(z)}^2 
 			&\lesssim \dfrac{2^k}{(1-\abs z)^{n}}\int_{E(z, \alpha)}\abs{f(w)}^2\rho^*(w)dv(w)\\
 			&\lesssim \dfrac{2^k}{(1-\abs z)^{n}}\int_{E(z, \alpha)}\abs{f(w)}^2\rho(w)dv(w).
 \end{align*}
 This completes the proof.
\end{proof}

\begin{proof}[\textbf{Proof of Theorem \ref{Schatten}.}]
$(a) \Rightarrow (b)$. This follows from Lemma \ref{bd8} $(b)$.

$(b)\Rightarrow (c)$. By Proposition \ref{md7} $(b)$, for sufficiently small $\alpha>0$, we have
$$ \abs{K_z(w)}^{2}\asymp \normw{K_z}^2\normw{K_w}^2, \qquad w\in E(z,\alpha), z\in \Bn.$$
Then by Proposition \ref{md7} $(a)$, we get
\begin{align*}
\widetilde{T_\mu}(z)
			&=\int_{\Bn}\abs{k_z(w)}^2d\mu(w)=\normw{K_z}^{-2}\int_{\Bn}\abs{K_z(w)}^2d\mu(w)\\
			&\ge \normw{K_z}^{-2}\int_{E(z,\alpha)}\abs{K_z(w)}^2d\mu(w)\\
			&\asymp \int_{E(z,\alpha)}\normw{K_w}^{2}d\mu(w) \asymp \widehat{\mu}_\alpha(z).
\end{align*}
Since $\widetilde{T_\mu}$ is in $L^{p}(\Bn, d\lambdaw)$, $\widehat{\mu}_\alpha$ is also in $L^{p}(\Bn, d\lambdaw)$.

$(c)\Rightarrow (a)$. For every orthonormal basis $\set{e_l}$ of $\A2w$, we have
\begin{equation}\label{17}
\sum_{l}\vhw{T_\mu e_l, e_l}^{p}=\sum_{l}\left (\int_{\Bn}\abs{e_l(z)}^2d\mu(z)\right )^{p}.
\end{equation}
By Lemma \ref{bd9}, 
$$\abs{e_l(z)}^2\lesssim \dfrac{2^k}{(1-\abs z)^n}\int_{E(z,\alpha)}\abs{e_l(w)}^2\rho(w)dv(w),  \qquad z\in \Omega_k.$$
By Fubini's theorem and H\"older's inequality, we have
\begin{align*}
\int_{\Bn}\abs{e_l(z)}^2d\mu(z)
			&\lesssim \int_{\Bn}\abs{e_l(w)}^2 \widehat{\mu}_\alpha(w) \rho(w)dv(w)\\
			&\le \left (\int_{\Bn} \abs{e_l(w)}^{2}\widehat{\mu}_\alpha(w)^p\rho(w)dv(w)\right )^{1/p} \\
			& \qquad \qquad \times \left (\int_{\Bn} \abs{e_l(w)}^{2}\rho(w)dv(w)\right )^{1/q} \\
			&=\left (\int_{\Bn} \abs{e_l(w)}^{2}\widehat{\mu}_\alpha(w)^p \rho(w)dv(w)\right )^{1/p},
\end{align*}
where $\frac{1}{p}+\frac{1}{q}=1.$
Thus, \eqref{17} implies that 
\begin{align*}
\sum_{l}\vhw{T_\mu e_l, e_l}^{p}
		&\lesssim \int_{\Bn}\left ( \sum_l\abs{e_l(w)}^{2}\right )\widehat{\mu}_\alpha(w)^p\rho(w)dv(w)\\
		&=\int_{\Bn} \normw{K_w}^2\widehat{\mu}_\alpha(w)^p\rho(w)dv(w)\\
		&\asymp \int_{\Bn} \widehat{\mu}_\alpha(w)^p d\lambdaw(w) <\infty.
\end{align*}
This proves $(a)$.
\end{proof}

\begin{remark} Let $1<p<\infty$. In the case of large weighted Bergman spaces, Arroussi, Park and Pau proved in \cite[Theorem 4.6]{Arroussi2015} that
$$
T_{\mu} \in \mathcal S_p \Longleftrightarrow \widetilde{\mu}_{\varepsilon}(z)=\dfrac{\mu\bigl(B(z,\varepsilon)\bigr)}{(1-\abs z)^{2n}} \text{ is in the corresponding weighted } L^p,
$$
where $B(z,\varepsilon)$ is the Euclidean ball  with center $z$ and radius $\varepsilon(1-\abs{z})$. When the dimension $n=1$, we can see that $\widetilde{\mu}_{\varepsilon}$ is in $L^p$ if and only if $\widehat{\mu}_\varepsilon$ is in $L^p$. However, for $n>1$, this equivalence is not true anymore. 

Let us verify this. Choose $z_k\in \Bn$ such that $\abs{z_k}$ tend to $1$ sufficiently rapidly as $k\to \infty$. Consider 
$$
\mu=\sum_{k=1}^{\infty}c_k \chi_{B( z_k,\varepsilon)}\quad \text{ and }\quad \mu^*=\sum_{k=1}^{\infty}c_k \chi_{B( z_k,3\varepsilon)},
$$
where $c_k >0$ will be chosen later. We have 
$$
\mu\lesssim \widetilde{\mu}_{\varepsilon} \lesssim \mu^* 
$$
and 
$$
 \sum_{k=1}^\infty c_k \dfrac{v\bigl(B( z_k,\varepsilon)\bigr)}{v\bigl(E(z_k,\varepsilon)\bigr)} \chi_{E(z_k,\varepsilon)}\lesssim  \widehat{\mu}_\varepsilon \lesssim \sum_{k=1}^\infty c_k \dfrac{v\bigl(B( z_k,\varepsilon)\bigr)}{v\bigl(E(z_k,\varepsilon)\bigr)} \chi_{E(z_k,3\varepsilon)}.
$$ 
Hence 
\begin{align*}
\widetilde{\mu}_{\varepsilon} \in L^{p} &\Longleftrightarrow \sum_{k=1}^\infty c_k^p v\bigl(B(z_k,\varepsilon)\bigr)<\infty,\\
\intertext{and}
\widehat{\mu}_\varepsilon \in L^p &\Longleftrightarrow \sum_{k=1}^\infty c_k^p\dfrac{\bigl(v\bigl(B(z_k,\varepsilon)\bigr)\bigr)^p}{\bigl(v\bigl(E(z_k,\varepsilon)\bigr)\bigr)^{p-1}} <\infty.
\end{align*}
Since 
\begin{multline*}
\frac{c_k^p\bigl(v\bigl(B(z_k,\varepsilon)\bigr)\bigr)^p\bigl(v\bigl(E(z_k,\varepsilon)\bigr)\bigr)^{1-p}}{c_k^p v\bigl(B(z_k,\varepsilon)\bigr)}
=\left (\dfrac{v\bigl(B(z_k,\varepsilon)\bigr)}{v\bigl(E(z_k,\varepsilon)\bigr)}\right )^{p-1}\\ \asymp (1-\abs{z_k})^{(n-1)(p-1)}\longrightarrow 0 
\end{multline*} 
as $k\to \infty$, we can choose $c_k$ such that $\widehat{\mu}_\varepsilon \in L^p$ but $\widetilde{\mu}_{\varepsilon} \notin L^{p}$. On the other hand, one can easily see that $\widetilde{\mu}_{\varepsilon} \in L^{p}$ implies $\widehat{\mu}_\varepsilon \in L^p$.
\end{remark}

\vspace{0.2cm}
\noindent \textbf{Acknowledgments.} I am deeply grateful to my advisors Professors Alexander Borichev and El Hassan Youssfi for their help and many suggestions during the preparation of this paper.


\end{document}